\newtheorem{theorem}{Theorem}
\newtheorem{definition}[theorem]{Definition}
\newtheorem{lemma}[theorem]{Lemma}
\newtheorem{proposition}[theorem]{Proposition}
\newenvironment{proof}[1][Proof]{\textbf{#1.} }{\ \rule{0.5em}{0.5em}}
\begin{document}
\begin{article}
\begin{opening}
\title{The Order Completion Method for Systems of Nonlinear PDEs Revisited}
\author{Jan Harm \surname{van der Walt}}
\runningauthor{J H van der Walt} \runningtitle{The Order
Completion Method For Systems Of Nonlinear PDEs}
\institute{Department of Mathematics and Applied Mathematics\\
University of Pretoria}
\date{}

\begin{abstract}
In this paper we presents further developments regarding the
enrichment of the basic Theory of Order Completion as presented in
\cite{Obergugenberger and Rosinger}.  In particular, spaces of
generalized functions are constructed that contain generalized
solutions to a large class of systems of continuous, nonlinear
PDEs. In terms of the existence and uniqueness results previously
obtained for such systems of equations \cite{vdWalt5}, one may
interpret the existence of generalized solutions presented here as
a regularity result.
\end{abstract}

\keywords{Nonlinear PDEs, Order Completion, Uniform Convergence
Space}

\classification{Mathematics Subject Clasifications (2000)} {34A34,
54A20, 06B30, 46E05}

\end{opening}

\section{Introduction}

Consider a possibly nonlinear PDE, of order at most $m$, of the
form
\begin{eqnarray}
T\left(x,D\right)u\left(x\right)=f\left(x\right)\mbox{,
}x\in\Omega\subseteq \mathbb{R}^{n}\label{IntPDE}
\end{eqnarray}
with the righthand term $f$ a continuous function of $x\in
\Omega$, and the partial differential operator $T\left(x,D\right)$
defined by some jointly continuous mapping
\begin{eqnarray}
F:\Omega\times \mathbb{R}^{M}\rightarrow \mathbb{R}\nonumber
\end{eqnarray}
through
\begin{eqnarray}
T\left(x,D\right)u\left(x\right)=
F\left(x,u\left(x\right),...,D^{\alpha}
u\left(x\right),...\right)\mbox{, }|\alpha|\leq m
\label{IntPDEDef}
\end{eqnarray}
It is well know that an equation of the form (\ref{IntPDE})
through (\ref{IntPDEDef}) may, in general, fail to have a
classical solution $u\in\mathcal{C}^{m}\left(\Omega\right)$.
Moreover, there is in fact a \textit{physical interest} in
solutions to (\ref{IntPDE}) that are not classical.  From there
the interest in generalized solutions to nonlinear PDEs.

A long established idea in analysis is to obtain the existence of
generalized solutions to (\ref{IntPDE}) by associating with the
partial differential operator $T\left(x,D\right)$ a mapping
\begin{eqnarray}
T:X\ni u\mapsto Tu\in Y\label{IntPDEMap}
\end{eqnarray}
where $X$ is a relatively small space of classical functions on
$\Omega$, and $Y$ is some suitable space of functions with $f\in
Y$.  Appropriate topological structures, typically a norm or
locally convex topology, are defined on $X$ and $Y$ so that the
mapping $T$ is uniformly continuous with respect to these
customary structures. Generalized solutions to (\ref{IntPDE}) are
obtained by constructing the completions $X^{\sharp}$ and
$Y^{\sharp}$ of $X$ and $Y$, respectively, and extending the
mapping $T$ to a mapping
\begin{eqnarray}
T^{\sharp}:X^{\sharp}\rightarrow Y^{\sharp}\label{IntPDEMapExt}
\end{eqnarray}
A solution of the equation
\begin{eqnarray}
T^{\sharp}u^{\sharp}=f\label{IntPDEExt}
\end{eqnarray}
where the unknown $u^{\sharp}$ ranges over $X^{\sharp}$, is
considered a generalized solution of (\ref{IntPDE}).

As mentioned, the customary structures on the spaces $X$ and $Y$
in (\ref{IntPDEMap}) are typically locally convex linear space
topologies, or even normable topologies.  However, such methods,
involving the customary linear topological spaces of generalized
functions, appear ineffective in providing a \textit{general} and
\textit{type independent} theory for the existence and regularity
of solutions to nonlinear PDEs.  This apparent failure of the
usual methods of linear functional analysis in the study of
nonlinear PDEs is ascribed to the `complicated geometry of
$\mathbb{R}^{n}$' \cite{Arnold}. Moreover, in view of the above
mentioned inability of linear functional analysis and other
customary methods to yield such a general approach, it is widely
held that it is in fact \textit{impossible}, or at the very best
\textit{highly unlikely}, that such a theory exists.

This, as will be seen in the sequel, is in fact a
misunderstanding.  In this regard, we should mention that there
are currently two \textit{general} and \textit{type independent}
theories for the existence and regularity of generalized solutions
of nonlinear PDEs.  The Central Theory of PDEs \cite{Neuberger 1}
through \cite{Neuberger 4} is based on a generalized method of
steepest descent in suitably constructed Hilbert spaces.  This
method is fully type independent, that is, the particular form of
the operator that defines the equation is not used, and rather
general, but as of yet it is not universally applicable.   In
those cases where the method has been applied, it has resulted in
impressive numerical results. The Order Completion Method
\cite{Obergugenberger and Rosinger} and \cite{AnguelovRosinger},
on the other hand, constructs generalized solutions to a large
class of nonlinear PDEs in the Dedekind order completion of
suitable spaces of functions.  The essential feature of both
methods is that the spaces of generalized functions are tied to
the particular nonlinear partial differential operator
$T\left(x,D\right)$. Moreover, the underlying ideas upon which
they are based apply to situations that are far more general than
PDEs, this being exactly the reason for their respective type
independent power.

Recently, \cite{vdWalt5} the Order Completion Method was recast in
the setting of uniform convergence spaces \cite{Beattie}.  For a
system of $K$ nonlinear PDEs, each of order at most $m$, in $K$
unknown functions of the form
\begin{eqnarray}
\textbf{T}\left(x,D\right)\textbf{u}\left(x\right)=
\textbf{f}\left(x\right)\mbox{, }x\in\Omega
\subseteq\mathbb{R}^{n},\label{SystPDE}
\end{eqnarray}
where $\Omega$ is open, and $\textbf{f}$ is a continuous,
$K$-dimensional vector valued function on $\Omega$ with components
$f_{1},...,f_{K}:\Omega\rightarrow \mathbb{R}$, generalized
solutions are constructed as the elements of the completion of a
suitable uniform convergence space.  In particular, subject to a
mild assumption on the PDE (\ref{SystPDE}), namely
\begin{eqnarray}
\begin{array}{ll}
\forall & x\in\Omega\mbox{ :} \\
& \textbf{f}\left(x\right)\in \textrm{int}\{\textbf{F}\left(x,\xi\right)\mbox{ : }\xi\in\mathbb{R}^{M}\} \\
\end{array}\label{Assumption}
\end{eqnarray}
where $\textbf{F}:\Omega\times\mathbb{R}^{M}\rightarrow
\mathbb{R}^{K}$ is the jointly continuous function that defines
the system of PDEs (\ref{SystPDE}) through
\begin{eqnarray}
\textbf{T}\left(x,D\right)\textbf{u}\left(x\right)=
\textbf{F}\left(x,u_{1}\left(x\right),...,u_{K}\left(x\right)
,...,D^{\alpha}u_{i}\left(x\right),...\right)\mbox{, }|\alpha|\leq
m \mbox{ and } i=1,...,K, \label{SystPDEDef}
\end{eqnarray}
we obtain the existence and uniqueness of generalized solutions to
(\ref{SystPDE}).  Moreover, the generalized solution satisfies a
blanket regularity as it may be assimilated with nearly finite
normal lower semi-continuous functions.  In particular, there is
an uniformly continuous embedding from the space of generalized
solutions into the space of nearly finite normal lower
semi-continuous functions.

It should be noted that the assumption (\ref{Assumption}) is
hardly a restriction on the class of PDEs to which the method
applies.  Indeed, every linear PDEs, as well as most nonlinear
PDEs of applicative interest satisfy it trivially since, in these
cases,
\begin{eqnarray}
\{\textbf{F}\left(x,\xi\right)\mbox{ : }\xi\in \mathbb{R}^{M}\}
=\mathbb{R}^{K}.\nonumber
\end{eqnarray}
Therefore, the Order Completion Method \cite{Obergugenberger and
Rosinger} and the pseudo-topological version of the theory
\cite{vdWalt5} which we briefly discuss here, is to a large extent
universally applicable.

Nevertheless, one may notice that there remains a large scope for
possible enrichment of the basic theory. In particular, the space
of generalized solutions may depend on the nonlinear partial
differential operator (\ref{SystPDEDef}). Moreover, there is no
differential structure on the space of generalized functions
associated with the operator $\textbf{T}\left(x,D\right)$.  The
aim of this paper is to resolve these issues.  This is achieved by
setting up appropriate uniform convergence spaces \cite{Beattie},
somewhat in the spirit of Sobolev, which do not depend on the
particular operator $\textbf{T}\left(x,D\right)$.

The paper is organized as follows.  Section 2 introduces the
relevant spaces of functions upon which the appropriate spaces of
generalized functions are constructed.  In Section 3 we discuss
the approximation results underlying the Order Completion Method,
and introduce a suitable condition on the system of nonlinear PDEs
that allows the existence of generalized solutions in appropriate
spaces of generalized functions. These results are then applied in
Section 4 where we prove the existence of generalized solutions.
The structure of the spaces of generalized functions, and that of
the generalized functions that are their elements, is discussed in
Section 5.

\section{Function Spaces and Their Completions}

Recall \cite{vdWalt3}, \cite{sam}, \cite{Dilworth} that an
extended real valued function $u:\Omega\rightarrow
\overline{\mathbb{R}}$ is normal lower semi-continuous if
\begin{eqnarray}
\left(I\circ S\right)\left(u\right)\left(x\right)=
u\left(x\right)\mbox{, }x\in\Omega\label{NLSDef}
\end{eqnarray}
where
\begin{eqnarray}
I\left(u\right)\left(x\right)= \sup\{\inf\{u\left(y\right)\mbox{ :
}y\in\Omega\mbox{, }\|x-y\|<\delta\}\mbox{ :
}\delta>0\}\label{IDef}
\end{eqnarray}
and
\begin{eqnarray}
S\left(u\right)\left(x\right)= \inf\{\sup\{u\left(y\right)\mbox{ :
}y\in\Omega\mbox{, }\|x-y\|<\delta\}\mbox{ :
}\delta>0\}\label{SDef}
\end{eqnarray}
are the the Lower and Upper Baire Operators respectively, see
\cite{Anguelov} and \cite{Baire}.  These operators, as well as
their composition, are monotone with respect to the pointwise
ordering of functions $u:\Omega\rightarrow \overline{\mathbb{R}}$,
and idempotent. A normal lower semi-continuous function is said to
be nearly finite if
\begin{eqnarray}
\{x\in\Omega\mbox{ : }u\left(x\right)\in\mathbb{R}\}\mbox{ open
and dense}\nonumber
\end{eqnarray}
The space of all nearly finite normal lower semi-continuous on
$\Omega$ is denoted by $\mathcal{NL}\left(\Omega\right)$.  These
functions satisfy the following well know property of continuous
real valued functions, namely,
\begin{eqnarray}
\begin{array}{ll}
\forall & u,v\in\mathcal{NL}\left(\Omega\right)\mbox{ :} \\
\forall & D\subseteq \Omega\mbox{ dense :} \\
& u\left(x\right)\leq v\left(x\right)\mbox{, }x\in D\Rightarrow u\left(x\right)\leq v\left(x\right)\mbox{, }x\in \Omega \\
\end{array}\label{IneqDSet}
\end{eqnarray}
Moreover, for every $u\in\mathcal{NL}\left(\Omega\right)$ there is
a set $B\subset\Omega$ of first Baire Category such that
$u\in\mathcal{C}\left(\Omega\setminus B\right)$.

Clearly, each function that is continuous is also normal lower
semi-continuous. For $0\leq l\leq \infty$, the subspace of
$\mathcal{NL}\left(\Omega\right)$ consisting of functions that are
continuous with continuous partial derivatives up to order $l$ on
some open and sense subset of $\Omega$ is denoted
$\mathcal{ML}^{l}\left(\Omega\right)$.  That is,
\begin{eqnarray}
\mathcal{ML}^{l}\left(\Omega\right)=
\left\{u\in\mathcal{NL}\left(\Omega\right)\begin{array}{|ll}
\exists & \Gamma\subset\Omega\mbox{ closed nowhere dense :} \\
& u\in\mathcal{C}^{l}\left(\Omega\setminus\Gamma\right) \\
\end{array}\right\}\label{MLDef}
\end{eqnarray}
The space $\mathcal{NL}\left(\Omega\right)$, ordered in a
pointwise way, is a fully distributive lattice \cite{vdWalt3}, and
contains $\mathcal{ML}^{0}\left(\Omega\right)$ as a sublattice.
Therefore \cite{vdWalt1} the order convergence of sequences
\cite{RieszI} on $\mathcal{ML}^{0}\left(\Omega\right)$, which is
defined through
\begin{eqnarray}
\left(u_{n}\right)\mbox{ order converges to }u\Leftrightarrow
\left( \begin{array}{ll}
\exists & \left(\lambda_{n}\right)\mbox{, }\left(\mu_{n}\right)\subset \mathcal{ML}^{0}\left(\Omega\right)\mbox{ :} \\
& \begin{array}{ll}
1) & \lambda_{n}\leq \lambda_{n+1}\leq u_{n+1} \leq \mu_{n+1}\leq \mu_{n}\mbox{, }n\in\mathbb{N} \\
2) & \sup\{\lambda_{n}\mbox{ : }n\in\mathbb{N}\}=u=\inf\{\mu_{n}\mbox{ : }n\in\mathbb{N}\} \\
\end{array} \\
\end{array}\right)\label{OConv}
\end{eqnarray}
is induced by a convergence structure \cite{Beattie}.  In fact,
the uniform convergence structure $\mathcal{J}_{o}$ \cite{vdWalt3}
induces the order convergence of sequences, and is defined as
follows.
\begin{definition}\label{JoDef}
A filter $\mathcal{U}$ on
$\mathcal{ML}^{0}\left(\Omega\right)\times\mathcal{ML}^{0}\left(\Omega\right)$
belongs to the family $\mathcal{J}_{o}$ whenever there exists
$k\in\mathbb{N}$ such that, for every $i=1,...,k$, there exists
nonempty order intervals $\left(I_{n}^{i}\right)_{n\in\mathbb{N}}$
such that following conditions are satisfied:
\begin{eqnarray}\nonumber
&\mbox{1) }&\mbox{$I_{n}^{i}\supseteq I_{n+1}^{i}$ for every
$n\in\mathbb{N}$.}\nonumber\\
&\mbox{2) }&\mbox{If $V\subseteq\Omega$ is open, then
$\displaystyle\bigcap_{n\in\mathbb{N}}I^{i}_{n|V}=\emptyset$, or
there exists
$u_{i}\in\mathcal{ML}^{0}\left(V\right)$ such}\nonumber\\
&&\mbox{that
$\displaystyle\bigcap_{n\in\mathbb{N}}I^{i}_{n|V}=\{u_{i}\}$.}\nonumber\\
&\mbox{3) }& \left(\mathcal{I}^{1}\times
\mathcal{I}^{1}\right)\bigcap... \bigcap\left(\mathcal{I}^{k}
\times\mathcal{I}^{k}\right) \subset\mathcal{U}\nonumber
\end{eqnarray}
Here $\mathcal{I}^{i}=[\{I^{i}_{n}\mbox{ : }n\in\mathbb{N}\}]$ and
$I^{i}_{n|V}$ consists of all functions $u\in I^{i}_{n}$,
restricted to $V$.
\end{definition}
The uniform convergence structure $\mathcal{J}_{o}$ is uniformly
Hausdorff and first countable.  Moreover, a filter $\mathcal{F}$
on $\mathcal{ML}^{0}\left(\Omega\right)$ converges to
$u\in\mathcal{ML}^{0}\left(\Omega\right)$ with respect to
$\mathcal{J}_{o}$ if and only if
\begin{eqnarray}
\begin{array}{ll}
\exists & \left(\lambda_{n}\right)\mbox{, }\left(\mu_{n}\right)\subset\mathcal{ML}^{0}\left(\Omega\right)\mbox{ :} \\
& \begin{array}{ll}
1) & \lambda_{n}\leq \lambda_{n+1}\leq \mu_{n+1}\leq\mu_{n}\mbox{, }n\in\mathbb{N} \\
2) & \sup\{\lambda_{n}\mbox{ : }n\in\mathbb{N}\}=u=\inf\{\mu_{n}\mbox{ : }n\in\mathbb{N}\} \\
3) & [\{[\lambda_{n},\mu_{n}]\mbox{ : }n\in\mathbb{N}\}]\subseteq\mathcal{F} \\
\end{array} \\
\end{array}\label{OCStructure}
\end{eqnarray}
The completion of the uniform convergence space
$\mathcal{ML}^{0}\left(\Omega\right)$ is obtained as the space
$\mathcal{NL}\left(\Omega\right)$ equipped with an appropriate
uniform convergence structure $\mathcal{J}_{o}^{\sharp}$, see
\cite{vdWalt3}.  In particular, the uniform convergence structure
$\mathcal{J}_{o}^{\sharp}$ induces the order convergence structure
(\ref{OCStructure}).

The usual partial differential operators on
$\mathcal{C}^{l}\left(\Omega\right)$, with $l\geq 1$, may be
extended to $\mathcal{ML}^{l}\left(\Omega\right)$ through
\begin{eqnarray}
\mathcal{D}^{\alpha}:\mathcal{ML}^{l}\left(\Omega\right)\ni
u\mapsto \left(I\circ S\right)\left(D^{\alpha} u\right) \in
\mathcal{ML}^{0}\left(\Omega\right)\label{ExtPDO}
\end{eqnarray}
Therefore, somewhat in the spirit of Sobolev, we equip the space
$\mathcal{ML}^{l}\left(\Omega\right)$, where $l\geq 1$, with the
initial uniform convergence structure $\mathcal{J}_{D}$ with
respect to the family of mappings
\begin{eqnarray}
\left(\mathcal{D}^{\alpha}:\mathcal{ML}^{l}\left(\Omega\right)
\rightarrow \mathcal{ML}^{0}\left(\Omega\right)\right)
_{|\alpha|\leq l}\label{Fam}
\end{eqnarray}
That is, for any filter $\mathcal{U}$ on
$\mathcal{ML}^{l}\left(\Omega\right)\times
\mathcal{ML}^{l}\left(\Omega\right)$, we have
\begin{eqnarray}
\mathcal{U}\in\mathcal{J}_{D}\Leftrightarrow
\left(\begin{array}{ll}
\forall & |\alpha|\leq l \\
& \left(\mathcal{D}^{\alpha}\times \mathcal{D}^{\alpha}\right)\left(\mathcal{U}\right)\in\mathcal{J}_{o} \\
\end{array}\right)\label{JDDef}
\end{eqnarray}
Since the family of mappings (\ref{Fam}) separates the elements of
$\mathcal{ML}^{l}\left(\Omega\right)$, that is,
\begin{eqnarray}
\begin{array}{ll}
\forall & u,v\in\mathcal{ML}^{l}\left(\Omega\right)\mbox{ :} \\
\exists & |\alpha|\leq l\mbox{ :} \\
& \mathcal{D}^{\alpha}u\neq \mathcal{D}^{\alpha}v \\
\end{array},\nonumber
\end{eqnarray}
it follows that $\mathcal{J}_{D}$ is uniformly Hausdorff.  A
filter $\mathcal{F}$ on $\mathcal{ML}^{l}\left(\Omega\right)$ is a
Cauchy filter if and only if
$\mathcal{D}^{\alpha}\left(\mathcal{F}\right)$ is a Cauchy filter
in $\mathcal{ML}^{0}\left(\Omega\right)$ for each $|\alpha|\leq
l$.  In particular, a filter $\mathcal{F}$ on
$\mathcal{ML}^{l}\left(\Omega\right)$ converges to
$u\in\mathcal{ML}^{l}\left(\Omega\right)$ if and only if
$\mathcal{D}^{\alpha}\left(\mathcal{F}\right)$ converges to
$\mathcal{D}^{\alpha}u$ in $\mathcal{ML}^{0}\left(\Omega\right)$
for each $|\alpha|\leq l$.  In view of the results \cite{vdWalt2}
on the completion of uniform convergence spaces, the completion of
$\mathcal{ML}^{l}\left(\Omega\right)$ is realized as a subspace of
$\mathcal{NL}\left(\Omega\right)^{M}$, for an appropriate
$M\in\mathbb{N}$. In analogy with the case $l=0$ we denote the
completion of $\mathcal{ML}^{l}\left(\Omega\right)$ by
$\mathcal{NL}^{l}\left(\Omega\right)$.  The structure of the space
$\mathcal{NL}^{l}\left(\Omega\right)$ and its elements will be
discussed in Section 5.

With a system of PDEs of the form (\ref{SystPDE}) we may associate
a mapping
\begin{eqnarray}
\textbf{T}:\mathcal{ML}^{m}\left(\Omega\right)^{K}\rightarrow
\mathcal{ML}^{0}\left(\Omega\right)^{K}\label{SystPDEMap}
\end{eqnarray}
Indeed, we can write the system (\ref{SystPDE}) componentwise as
\begin{eqnarray}
\begin{array}{ccc}
T_{1}\left(x,D\right)\textbf{u}\left(x\right) & = & f_{1}\left(x\right) \\
\vdots & \vdots & \vdots \\
T_{j}\left(x,D\right)\textbf{u}\left(x\right) & = & f_{j}\left(x\right) \\
\vdots & \vdots & \vdots \\
T_{K}\left(x,D\right)\textbf{u}\left(x\right) & = & f_{K}\left(x\right) \\
\end{array}\label{SystPDEComp}
\end{eqnarray}
where, for each $j=1,...,K$ the component $T_{j}\left(x,D\right)$
of $\textbf{T}\left(x,D\right)$ is defined through the component
$F_{j}$ of the mapping $\textbf{F}$ by
\begin{eqnarray}
T_{j}\left(x,D\right)\textbf{u}\left(x\right) =
F_{j}\left(x,u_{1}\left(x\right),...,u_{K}\left(x\right),...,
D^{\alpha}u_{i}\left(x\right),...\right)\mbox{, }|\alpha|\leq
m\mbox{ and }i=1,...,K \label{SystPDECompDef}
\end{eqnarray}
The mappings (\ref{SystPDECompDef}) are then extended to
$\mathcal{ML}^{m}\left(\Omega\right)$ through
\begin{eqnarray}
T_{j}:\mathcal{ML}^{m}\left(\Omega\right)^{K}\ni\textbf{u}\mapsto
\left(I\circ S\right)\left(F_{j}\left(\cdot,u_{1},...,u_{K},...,
\mathcal{D}^{\alpha} u_{i},...\right)\right)\in
\mathcal{ML}^{0}\left(\Omega\right),\label{SystPDEMapComp}
\end{eqnarray}
yielding the components of the mapping (\ref{SystPDEMap}).  An
equivalence relation is induced on $\mathcal{ML}^{m}\left(
\Omega\right)^{K}$ by the mapping $\textbf{T}$ through
\begin{eqnarray}
\begin{array}{ll}
\forall & \textbf{u}\mbox{, }\textbf{v}\in\mathcal{ML}^{m}\left(\Omega\right)^{K}\mbox{ :} \\
& \textbf{u}\sim_{\textbf{T}}\textbf{v} \Leftrightarrow \textbf{Tu}=\textbf{Tv} \\
\end{array}\label{TEquiv}
\end{eqnarray}
The resulting quotient space
$\mathcal{ML}^{m}\left(\Omega\right)^{K}/\sim_{\textbf{T}}$ is
denoted $\mathcal{ML}^{m}_{\textbf{T}}\left(\Omega\right)$.  With
the mapping (\ref{SystPDEMap}) we may associate in a canonical way
an \textit{injective} mapping
\begin{eqnarray}
\widehat{\textbf{T}}:\mathcal{ML}^{m}_{\textbf{T}}\left(\Omega\right)
\rightarrow \mathcal{ML}^{0}\left(\Omega\right)^{K}\label{THat}
\end{eqnarray}
so that the diagram

%\begin{math}
\setlength{\unitlength}{1cm} \thicklines
%\begin{pspicture}(13,6)
\begin{picture}(13,6)
\put(2.0,4.7){$\mathcal{ML}^{m}\left(\Omega\right)^{K}$}
\put(3.8,4.8){\vector(1,0){6.5}}
\put(10.5,4.7){$\mathcal{ML}^{0}\left(\Omega\right)^{K}$}
\put(6.6,5.1){$\textbf{T}$} \put(3.0,4.5){\vector(0,-1){3.3}}
\put(2.3,2.7){$q_{\textbf{T}}$} \put(11.5,2.7){$id$}
\put(2.3,0.8){$\mathcal{ML}^{m}_{\textbf{T}}\left(\Omega\right)$}
\put(4.1,0.8){\vector(1,0){6.2}}
\put(6.6,1.0){$\widehat{\textbf{T}}$}
\put(10.5,0.7){$\mathcal{ML}^{0}\left(\Omega\right)^{K}$}
\put(11.2,4.5){\vector(0,-1){3.3}}
\end{picture}
%\end{math}
\\
\\
commutes.  Here $q_{\textbf{T}}$ denotes the canonical quotient
map associated with the equivalence relation (\ref{TEquiv}), and
$id$ is the identity on $\mathcal{ML}^{0}\left(\Omega\right)^{K}$.

The product spaces $\mathcal{ML}^{0}\left(\Omega\right)^{K}$ and
$\mathcal{ML}^{m}\left(\Omega\right)^{K}$ will carry, naturally,
the product uniform convergence structures $\mathcal{J}_{o}^{K}$
and $\mathcal{J}_{D}^{K}$, respectively. That is,
\begin{eqnarray}
\mathcal{U}\in\mathcal{J}_{o}^{K}\Leftrightarrow
\left(\begin{array}{ll}
\forall & i=1,...,K\mbox{ :} \\
& \left(\pi_{i}\times\pi_{i}\right)\left(\mathcal{U}\right)\in\mathcal{J}_{o} \\
\end{array}\right)\label{ML0ProdUCS}
\end{eqnarray}
and
\begin{eqnarray}
\mathcal{U}\in\mathcal{J}_{D}^{K}\Leftrightarrow
\left(\begin{array}{ll}
\forall & i=1,...,K\mbox{ :} \\
& \left(\pi_{i}\times\pi_{i}\right)\left(\mathcal{U}\right)\in\mathcal{J}_{D} \\
\end{array}\right)\label{ML0ProdUCS}
\end{eqnarray}
Here $\pi_{i}$ denotes the projection on the $i$th coordinate. The
completion of $\mathcal{ML}^{0}\left(\Omega\right)^{K}$ is
$\mathcal{NL}\left(\Omega\right)^{K}$ equipped with the product
uniform convergence structure induced by
$\mathcal{J}_{o}^{\sharp}$.  Similarly, the completion of
$\mathcal{ML}^{m}\left(\Omega\right)^{K}$ is
$\mathcal{NL}^{m}\left(\Omega\right)^{K}$.

The space $\mathcal{ML}^{m}_{\textbf{T}}\left(\Omega\right)$
carries the initial uniform convergence structure
$\mathcal{J}_{\widehat{\textbf{T}}}$ with respect to the mapping
$\widehat{\textbf{T}}$.  That is,
\begin{eqnarray}
\mathcal{U}\in\mathcal{J}_{\widehat{\textbf{T}}}\Leftrightarrow
\widehat{\textbf{T}}\left(\mathcal{U}\right)\in\mathcal{J}_{o}^{K}
\label{JTDef}
\end{eqnarray}
Since $\widehat{\textbf{T}}$ is \textit{injective}, it follows
that $\mathcal{ML}^{m}_{\textbf{T}}\left(\Omega\right)$ is
uniformly isomorphic to the subspace $\widehat{\textbf{T}}\left(
\mathcal{ML}^{m}_{\textbf{T}}\left(\Omega\right)\right)$ of
$\mathcal{ML}^{0}\left(\Omega\right)^{K}$.  In particular,
$\widehat{\textbf{T}}$ is a uniformly continuous embedding.  In
view of the general results on the completion of uniform
convergence spaces \cite{vdWalt2}, the completion
$\mathcal{NL}_{\textbf{T}}\left(\Omega\right)$ of
$\mathcal{ML}^{m}_{\textbf{T}}\left(\Omega\right)$ is homeomorphic
to a subspace of $\mathcal{NL}\left(\Omega\right)^{K}$.  Indeed,
$\widehat{\textbf{T}}$ extends to a uniformly continuous embedding
of $\mathcal{NL}_{\textbf{T}}\left(\Omega\right)$ into
$\mathcal{NL}\left(\Omega\right)^{K}$.

\section{Approximation Results}

In this section we consider the approximation results underlying
the Order Completion Method \cite{Obergugenberger and Rosinger}
and the pseudo-topological version of that theory developed in
\cite{vdWalt5}.  In this regard we again consider a system of
nonlinear PDEs of the form (\ref{SystPDE}) through
(\ref{SystPDEDef}). Recall \cite{vdWalt5} that, subject to the
mild assumption (\ref{Assumption}), we obtain the following
\textit{local} approximation result.  We include the proof as an
illustration of the technique used. Moreover, it serves to clarify
the arguments that lead to a refinement of these results.
\begin{proposition}\label{LocApprox}
Consider a system of PDEs of the form (\ref{SystPDE}) through
(\ref{SystPDEDef}) that also satisfies (\ref{Assumption}).  Then
\begin{eqnarray}
\begin{array}{ll}
\forall & x_{0}\in\Omega\mbox{ :} \\
\forall & \epsilon>0\mbox{ :} \\
\exists & \delta>0\mbox{, }P_{1},...,P_{K}\mbox{ polynomial
in $x\in\mathbb{R}^{n}$ :} \\
& x\in B\left(x_{0},\delta\right)\cap\Omega\mbox{, }1\leq i\leq K
\Rightarrow f_{i}\left(x\right)-\epsilon<
T_{i}\left(x,D\right)\textbf{P}\left(x\right)<
f_{i}\left(x\right) \\
\end{array}\label{ALemEq}
\end{eqnarray}
Here $\textbf{P}$ is the $K$-dimensional vector valued function
with components $P_{1},...,P_{K}$.
\end{proposition}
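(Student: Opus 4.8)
The plan is to reduce the statement to a pointwise jet-realization problem at $x_{0}$, solve it with a Taylor polynomial, and then spread the resulting strict inequalities to a small ball by continuity. First I would exploit the assumption (\ref{Assumption}) at the single point $x_{0}$. Since $\textbf{f}\left(x_{0}\right)$ lies in the interior of the set $\{\textbf{F}\left(x_{0},\xi\right):\xi\in\mathbb{R}^{M}\}$, there is some $r>0$ with $B\left(\textbf{f}\left(x_{0}\right),r\right)\subseteq\{\textbf{F}\left(x_{0},\xi\right):\xi\in\mathbb{R}^{M}\}$. Choosing $\rho>0$ with $\rho<\epsilon$ and $\rho\sqrt{K}<r$, the point obtained by subtracting $\rho$ from each coordinate of $\textbf{f}\left(x_{0}\right)$ still lies in this ball, hence in the range of $\xi\mapsto\textbf{F}\left(x_{0},\xi\right)$. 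Thus there exists $\xi_{0}\in\mathbb{R}^{M}$ with $F_{i}\left(x_{0},\xi_{0}\right)=f_{i}\left(x_{0}\right)-\rho$ for every $i=1,\dots,K$, so that each target value sits strictly between $f_{i}\left(x_{0}\right)-\epsilon$ and $f_{i}\left(x_{0}\right)$.

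Next I would realize $\xi_{0}$ as the $m$-jet at $x_{0}$ of a polynomial vector. The coordinates of $\xi\in\mathbb{R}^{M}$ correspond, through (\ref{SystPDECompDef}), to the values $u_{1},\dots,u_{K}$ together with all derivatives $D^{\alpha}u_{i}$, $|\alpha|\leq m$, $i=1,\dots,K$. Given the prescribed jet data encoded in $\xi_{0}$, one constructs for each $i$ the Taylor polynomial $P_{i}$ of degree $m$ centred at $x_{0}$ whose coefficients are dictated by the corresponding entries of $\xi_{0}$; then $D^{\alpha}P_{i}\left(x_{0}\right)$ equals exactly the prescribed entry for each $|\alpha|\leq m$, with no compatibility constraints to check. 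Writing $\textbf{P}$ for the vector with components $P_{1},\dots,P_{K}$, it follows from (\ref{SystPDECompDef}) that $T_{i}\left(x_{0},D\right)\textbf{P}\left(x_{0}\right)=F_{i}\left(x_{0},\xi_{0}\right)=f_{i}\left(x_{0}\right)-\rho$ for each $i$.

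Finally I would invoke continuity to pass from the single point $x_{0}$ to a neighbourhood. Because $\textbf{F}$ is jointly continuous and the polynomials $P_{i}$ together with all their derivatives are continuous, each map
\[
g_{i}:x\mapsto f_{i}\left(x\right)-T_{i}\left(x,D\right)\textbf{P}\left(x\right)=f_{i}\left(x\right)-F_{i}\left(x,P_{1}\left(x\right),\dots,D^{\alpha}P_{i}\left(x\right),\dots\right)
\]
is continuous on $\Omega$ with $g_{i}\left(x_{0}\right)=\rho\in\left(0,\epsilon\right)$. Hence for each $i$ there is $\delta_{i}>0$ with $0<g_{i}\left(x\right)<\epsilon$ on $B\left(x_{0},\delta_{i}\right)\cap\Omega$, which is precisely the required two-sided strict inequality. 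Taking $\delta=\min\{\delta_{1},\dots,\delta_{K}\}$, finite since the system has only $K$ components, yields (\ref{ALemEq}).

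The computation is essentially routine once the right object is identified; the only point demanding real care is the first step, where the interiority hypothesis (\ref{Assumption}) must be used to produce a jet whose image under $\textbf{F}$ lands strictly below $\textbf{f}\left(x_{0}\right)$ in every component simultaneously, rather than merely equal to or above it. This one-sided, all-component control is exactly what interiority buys over mere attainability of $\textbf{f}\left(x_{0}\right)$, and it is what the later existence theory exploits. I would also remark that since $\textbf{P}$ is smooth the regularization $I\circ S$ appearing in (\ref{SystPDEMapComp}) acts trivially here, so the classical pointwise operator written in (\ref{ALemEq}) is indeed the relevant one.
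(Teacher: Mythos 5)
Your proposal is correct and follows essentially the same route as the paper's own proof: use (\ref{Assumption}) to find a jet $\xi_{0}$ with $\textbf{F}\left(x_{0},\xi_{0}\right)$ strictly below $\textbf{f}\left(x_{0}\right)$ in every component, realize it by polynomials via $D^{\alpha}P_{i}\left(x_{0}\right)=\xi_{i\alpha}$, and spread the strict inequalities to a ball by joint continuity. Your explicit ball-radius argument for why the interiority hypothesis yields the target value $f_{i}\left(x_{0}\right)-\rho$ simultaneously in all components merely spells out what the paper compresses into ``for $\epsilon>0$ small enough.''
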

\begin{proof}
For any $x_{0}\in\Omega$ and $\epsilon>0$ small enough it follows
by (\ref{Assumption}) that there exist
\begin{eqnarray}
\xi_{i\alpha}\in\mathbb{R}\mbox{ with }i=1,...,K\mbox{ and
}|\alpha|\leq m\nonumber
\end{eqnarray}
such that
\begin{eqnarray}
F_{i}\left(x_{0},...,\xi_{i\alpha},...\right)=f_{i}\left(x_{0}\right)-\frac{\epsilon}{2}\nonumber
\end{eqnarray}
Now take $P_{1},...,P_{K}$ polynomials in $x\in\mathbb{R}^{n}$
that satisfy
\begin{eqnarray}
D^{\alpha}P_{i}\left(x_{0}\right)=\xi_{i\alpha}\mbox{ for
}i=1,...,K\mbox{ and }|\alpha|\leq m\nonumber
\end{eqnarray}
Then it is clear that
\begin{eqnarray}
T_{i}\left(x,D\right)\textbf{P}\left(x_{0}\right)-f_{i}\left(x_{0}\right)=-\frac{\epsilon}{2}\nonumber
\end{eqnarray}
where $\textbf{P}$ is the $K$-dimensional vector valued function
on $\mathbb{R}^{n}$ with components $P_{1},...,P_{K}$.  Hence
(\ref{ALemEq}) follows by the continuity of the $f_{i}$ and the
$F_{i}$.
\end{proof}\\ \\
It should be observed that, in contradistinction to the usual
functional analytic methods, the local \textit{lower solution} in
Proposition \ref{LocApprox} is constructed in a particularly
simple way. Indeed, it is obtained by nothing but a
straightforward application of the continuity of the mapping
$\textbf{F}$.  Using exactly these same techniques, one may prove
the existence of the corresponding approximate \textit{upper
solutions}.
\begin{proposition}\label{LocApproxII}
Consider a system of PDEs of the form (\ref{SystPDE}) through
(\ref{SystPDEDef}) that also satisfies (\ref{Assumption}).  Then
\begin{eqnarray}\label{ALemEq1}
\begin{array}{ll}
\forall & x_{0}\in\Omega\mbox{ :} \\
\forall & \epsilon>0\mbox{ :} \\
\exists & \delta>0\mbox{, }P_{1},...,P_{K}\mbox{ polynomial
in $x\in\mathbb{R}^{n}$ :} \\
& x\in B\left(x_{0},\delta\right)\cap\Omega\mbox{, }1\leq i\leq k
\Rightarrow f_{i}\left(x\right)<
T_{i}\left(x,D\right)\textbf{P}\left(x\right)<
f_{i}\left(x\right)+\epsilon \\
\end{array}\nonumber
\end{eqnarray}
Here $\textbf{P}$ is the $K$-dimensional vector valued function
with components $P_{1},...,P_{K}$.
\end{proposition}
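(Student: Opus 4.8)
The plan is to reproduce verbatim the argument establishing Proposition~\ref{LocApprox}, with the single change that the approximating value is taken slightly \emph{above} $f_i(x_0)$ rather than below it. The point that makes this admissible is the \emph{interior} character of the hypothesis (\ref{Assumption}): since $\textbf{f}(x_0)$ lies in the interior of the range $\{\textbf{F}(x_0,\xi):\xi\in\mathbb{R}^M\}$, an entire neighbourhood of $\textbf{f}(x_0)$ in $\mathbb{R}^K$ is attained by $\textbf{F}(x_0,\cdot)$, so values on the upper side of $f_i(x_0)$ are just as accessible as those on the lower side exploited in Proposition~\ref{LocApprox}.

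Concretely, I would first fix $x_0\in\Omega$ and choose $\epsilon>0$ small enough that the point $\textbf{f}(x_0)+\frac{\epsilon}{2}(1,\ldots,1)$ still lies in the interior furnished by (\ref{Assumption}). That assumption then yields $\xi_{i\alpha}\in\mathbb{R}$, for $i=1,\ldots,K$ and $|\alpha|\leq m$, with $F_i(x_0,\ldots,\xi_{i\alpha},\ldots)=f_i(x_0)+\frac{\epsilon}{2}$ for every $i$. Exactly as before, the truncated Taylor polynomials $P_i(x)=\sum_{|\alpha|\leq m}\frac{\xi_{i\alpha}}{\alpha!}(x-x_0)^\alpha$ satisfy $D^\alpha P_i(x_0)=\xi_{i\alpha}$, and substituting into (\ref{SystPDECompDef}) gives $T_i(x_0,D)\textbf{P}(x_0)-f_i(x_0)=\frac{\epsilon}{2}$ for each $i$, where $\textbf{P}$ has components $P_1,\ldots,P_K$.

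It then remains to pass from $x_0$ to a neighbourhood. Each map $x\mapsto T_i(x,D)\textbf{P}(x)-f_i(x)$ is continuous, being built from the jointly continuous $F_i$, the smooth derivatives $x\mapsto D^\alpha P_i(x)$, and the continuous $f_i$, and equals $\frac{\epsilon}{2}\in(0,\epsilon)$ at $x_0$. Hence for each $i$ there is a ball on which this difference remains strictly between $0$ and $\epsilon$; choosing $\delta$ as the minimum of these finitely many radii forces $f_i(x)<T_i(x,D)\textbf{P}(x)<f_i(x)+\epsilon$ simultaneously for all $i$ on $B(x_0,\delta)\cap\Omega$, which is the claim.

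The main point requiring attention --- there being no genuine obstacle, since the statement is the exact mirror image of Proposition~\ref{LocApprox} --- is the very first step: one must confirm that (\ref{Assumption}), being an interior condition in $\mathbb{R}^K$, really does deliver attainable values above $\textbf{f}(x_0)$ and not merely below. Once this is observed, the Taylor construction and the continuity estimate are word-for-word identical to the lower-solution case.
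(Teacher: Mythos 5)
Your proposal is correct and follows exactly the route the paper intends: it explicitly omits the proof of this proposition, remarking only that it is obtained by ``exactly these same techniques'' as Proposition~\ref{LocApprox}, which is precisely your mirrored construction with $f_i(x_0)+\frac{\epsilon}{2}$ in place of $f_i(x_0)-\frac{\epsilon}{2}$. Your observation that the interior condition in (\ref{Assumption}) is what makes values above $\textbf{f}(x_0)$ attainable is exactly the point on which the symmetry of the two propositions rests.
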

The \textit{global} approximations, corresponding to the local
approximation constructed in Propositions \ref{LocApprox} and
\ref{LocApproxII}, may be formulated as follows.
\begin{theorem}\label{GApprox}
Consider a system of PDEs of the form (\ref{SystPDE}) through
(\ref{SystPDEDef}) that also satisfies (\ref{Assumption}).  For
every $\epsilon>0$ there exists a closed nowhere dense set
$\Gamma_{\epsilon}\subset\Omega$, and functions
$\textbf{U}_{\epsilon},\textbf{V}_{\epsilon}\in\mathcal{C}^{m}
\left(\Omega\setminus\Gamma_{\epsilon}\right)^{K}$ with components
$U_{\epsilon,1},...,U_{\epsilon,K}$ and
$V_{\epsilon,1},...,V_{\epsilon,K}$ respectively, such that
\begin{equation}\label{ApEq}
f_{i}\left(x\right)-\epsilon<
T_{i}\left(x,D\right)\textbf{U}_{\epsilon}\left(x\right)<
f_{i}\left(x\right)< T_{i}\left(x,D\right)\textbf{V}_{\epsilon}
\left(x\right)< f_{i}\left(x\right)+\epsilon\mbox{,
}x\in\Omega\setminus\Gamma_{\epsilon}
\end{equation}
\end{theorem}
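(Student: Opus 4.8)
The plan is to promote the \emph{local} lower and upper approximations of Propositions \ref{LocApprox} and \ref{LocApproxII} to a single \emph{global} pair of approximate solutions by pasting the local polynomials together over a countable family of pairwise disjoint open balls whose union is dense in $\Omega$; the complement of this union will be the exceptional set $\Gamma_{\epsilon}$. To begin, fix $\epsilon>0$. For each $x_{0}\in\Omega$ I would apply Propositions \ref{LocApprox} and \ref{LocApproxII} and retain the smaller of the two radii, obtaining a single $\delta_{x_{0}}>0$ together with $K$-dimensional vector valued polynomials $\textbf{P}^{x_{0}}$ and $\textbf{Q}^{x_{0}}$ such that, for all $x\in B(x_{0},\delta_{x_{0}})\cap\Omega$ and $1\leq i\leq K$,
\begin{equation}
f_{i}(x)-\epsilon< T_{i}(x,D)\textbf{P}^{x_{0}}(x)< f_{i}(x)< T_{i}(x,D)\textbf{Q}^{x_{0}}(x)< f_{i}(x)+\epsilon. \nonumber
\end{equation}
The balls $B(x_{0},\delta_{x_{0}})$, with $x_{0}$ ranging over $\Omega$, then form an open cover of $\Omega$.

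The heart of the argument is the construction of $\Gamma_{\epsilon}$. I would call an open ball $C\subset\Omega$ \emph{admissible} if $C\subseteq B(x_{0},\delta_{x_{0}})$ for some $x_{0}\in\Omega$, and, using Zorn's Lemma, choose a family $\{C_{\nu}\}$ of pairwise disjoint admissible balls that is maximal with respect to set inclusion. Because $\Omega$ is separable, any collection of pairwise disjoint nonempty open sets is at most countable, so $\{C_{\nu}\}$ may be indexed by a subset of $\mathbb{N}$. Maximality forces $\bigcup_{\nu}C_{\nu}$ to be dense in $\Omega$: otherwise the nonempty open set $\Omega\setminus\overline{\bigcup_{\nu}C_{\nu}}$, being contained in $\Omega$ and hence covered by the balls $B(x_{0},\delta_{x_{0}})$, would contain a further admissible ball disjoint from every $C_{\nu}$, contradicting maximality. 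Setting $\Gamma_{\epsilon}=\Omega\setminus\bigcup_{\nu}C_{\nu}$, the set $\Gamma_{\epsilon}$ is then closed in $\Omega$, and it is nowhere dense since its complement $\bigcup_{\nu}C_{\nu}$ is open and dense.

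It then remains to assemble and verify the approximations. For each $\nu$ I would fix $x_{\nu}\in\Omega$ with $C_{\nu}\subseteq B(x_{\nu},\delta_{x_{\nu}})$, and define $\textbf{U}_{\epsilon}:=\textbf{P}^{x_{\nu}}$ and $\textbf{V}_{\epsilon}:=\textbf{Q}^{x_{\nu}}$ on $C_{\nu}$. Since the $C_{\nu}$ are pairwise disjoint open sets and each local datum is a polynomial, $\textbf{U}_{\epsilon}$ and $\textbf{V}_{\epsilon}$ are unambiguously defined on $\Omega\setminus\Gamma_{\epsilon}=\bigcup_{\nu}C_{\nu}$ and are in fact $\mathcal{C}^{\infty}$ there, hence lie in $\mathcal{C}^{m}(\Omega\setminus\Gamma_{\epsilon})^{K}$; no matching condition arises, as distinct pieces are separated by $\Gamma_{\epsilon}$. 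Finally, on each $C_{\nu}\subseteq B(x_{\nu},\delta_{x_{\nu}})\cap\Omega$ the displayed estimate holds for $\textbf{P}^{x_{\nu}}$ and $\textbf{Q}^{x_{\nu}}$, so (\ref{ApEq}) holds at every point of $\bigcup_{\nu}C_{\nu}=\Omega\setminus\Gamma_{\epsilon}$, which is exactly what is required.

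The routine parts of this scheme are the regularity of the pasted functions and the pointwise estimates, both of which transfer verbatim from the local propositions precisely because the admissible balls are pairwise disjoint. I expect the only genuine obstacle to be the covering step, namely producing a disjoint admissible family whose union is dense and confirming that its complement $\Gamma_{\epsilon}$ is closed and nowhere dense; this is where the separability and Baire-type structure of $\Omega\subseteq\mathbb{R}^{n}$ is essential, and it is the analogue, in the present setting, of the partition arguments underlying the classical Order Completion Method.
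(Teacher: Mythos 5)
Your argument is correct, but it reaches Theorem \ref{GApprox} by a different covering mechanism than the paper uses. The paper (following \cite[Theorem 4]{vdWalt5}, and as replayed in the proof of Theorem \ref{SolExIII}) writes $\Omega$ as a locally finite union of compact $n$-dimensional intervals $C_{\nu}$ with pairwise disjoint interiors, uses compactness of each $C_{\nu}$ to extract a single $\delta>0$ valid over the whole tile, and then subdivides $C_{\nu}$ into finitely many subintervals of diameter less than $\delta$, taking $\Gamma_{\epsilon}$ to be the resulting grid of faces. You instead take a Zorn-maximal pairwise disjoint family of ``admissible'' balls and let $\Gamma_{\epsilon}$ be the complement of their union; your verification that this union is dense (hence that $\Gamma_{\epsilon}$ is closed nowhere dense) and that the estimates and $\mathcal{C}^{m}$ regularity transfer verbatim to the disjoint pieces is sound. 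The trade-offs are worth noting. Your route is shorter and entirely avoids the compactness/uniform-$\delta$ step, at the cost of an appeal to choice and of producing a less structured exceptional set: a maximal disjoint family of balls can leave behind a $\Gamma_{\epsilon}$ of positive Lebesgue measure (the complementary intervals of a fat Cantor set already form such a maximal family in dimension one), whereas the paper's grid of interval faces is locally finite and of measure zero. Neither property is demanded by the statement of Theorem \ref{GApprox}, so your proof is complete as it stands; but the tiling-plus-subdivision machinery is not an incidental choice in the paper, since the proof of Theorem \ref{SolExIII} reuses exactly that decomposition to additionally pin down the derivatives $D^{\alpha}U_{i}$ within prescribed balls $B_{\epsilon_{\nu,j}}\left(\xi\left(a_{\nu,j}\right)\right)$ on each subinterval, a refinement for which the uniform $\delta$ over each compact tile is what does the work. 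Your construction proves the stated theorem but would need to be supplemented by a compactness argument of that kind before it could replace the paper's scheme in the later existence proof.
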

Once again, and as was mentioned in connection with Proposition
\ref{LocApprox}, the approximation result above is based
\textit{solely} on the existence of a compact tiling of any open
subset $\Omega$ of $\mathbb{R}^{n}$, the properties of compact
subsets of $\mathbb{R}^{n}$ and the continuity of usual real
valued functions, see for instance \cite[Theorem 4]{vdWalt5}.
Hence it makes no use of so called \textit{advanced mathematics}.
In particular, techniques from functional analysis are not used at
all.  Instead, the relevant techniques belong rather to the
classical theory of real functions.

As an immediate application of Theorem \ref{GApprox}, we may
construct a sequence $\left(\textbf{u}_{n}\right)$ in
$\mathcal{ML}^{m}\left(\Omega\right)^{K}$ so that $\left(
\textbf{Tu}_{n}\right)$ converges to $\textbf{f}$ in
$\mathcal{ML}^{0}\left(\Omega\right)^{K}$.  However, Theorem
\ref{GApprox} makes no claim concerning the convergence of the
sequence $\left(\textbf{u}_{n}\right)$, or lack thereof, in
$\mathcal{ML}^{m}\left(\Omega\right)^{K}$.  Indeed, assuming only
that (\ref{Assumption}) is satisfied, it is possible, in almost
all cases of applicative interest, to construct a sequence
$\left(\textbf{U}_{n}\right)$ that satisfies Theorem
\ref{GApprox}, and is unbounded on every neighborhood of every
point of $\Omega$.  This follows easily from the fact that, in
general, for a fixed $x_{0}\in\Omega$, the sets
\begin{eqnarray}
\{\xi\in\mathbb{R}^{M}\mbox{ : }\textbf{F}\left(x_{0},\xi\right)=
\textbf{f}\left(x_{0}\right)\}\nonumber
\end{eqnarray}
may be unbounded.

In view of the above remarks, it appears that a stronger
assumption than (\ref{Assumption}) may be required in order to
construct generalized solutions to (\ref{SystPDE}) in
$\mathcal{NL}^{m}\left(\Omega\right)^{K}$.  When formulating such
an appropriate condition on the system of PDEs (\ref{SystPDE}),
one should keep in mind that the Order Completion Method
\cite{Obergugenberger and Rosinger}, and in particular the
pseudo-topological version of the theory developed in
\cite{vdWalt3} and \cite{vdWalt5}, is based on some basic
topological processes, namely, the completion of uniform
convergence spaces, and the simple condition (\ref{Assumption}),
which is formulated entirely in terms of the usual real mappings
$\textbf{F}$ and $\textbf{f}$.  In particular, (\ref{Assumption})
does not involve any topological structures on function spaces, or
mappings on such spaces.  Furthermore, other than the mere
continuity of the mapping $\textbf{F}$, (\ref{Assumption}) places
no restriction on the \textit{type} of equation treated.  As such,
it is then clear that any further assumptions that we may wish to
impose on the system of equations (\ref{SystPDE}) in order to
obtain generalized solutions in $\mathcal{NL}^{m}\left(
\Omega\right)^{K}$ should involve only basic topological
properties of the mapping $\textbf{F}$, and should not involve any
restrictions on the type of equations.

In formulating such a condition on the system of PDEs
(\ref{SystPDE}) that will ensure the existence of a generalized
solution in $\mathcal{NL}^{m}\left(\Omega\right)^{K}$, it is
helpful to first understand more completely the role of the
condition (\ref{Assumption}) in the proof of the local
approximation result Proposition \ref{LocApprox}.  In particular,
and as is clear from the proof of Proposition \ref{LocApprox}, the
condition (\ref{Assumption}) relates to the continuity of the
mapping $\textbf{F}$.  Furthermore, and as has already been
mentioned, the approximations constructed in Proposition
\ref{LocApprox} and Theorem \ref{GApprox} concern only convergence
in the target space of the operator $\textbf{T}$ associated with
(\ref{SystPDE}).  Our interest here lies in constructing suitable
approximations in the domain of $\textbf{T}$, and as such,
properties of the inverse of the mapping $\textbf{F}$ may prove to
be particularly useful.  In view of these remarks, we introduce
the following condition.
\begin{eqnarray}
\begin{array}{ll}
\forall & x_{0}\in\Omega\mbox{ :} \\
\exists & \xi\left(x_{0}\right)\in\mathbb{R}^{M}\mbox{ :} \\
\exists & V\in\mathcal{V}_{x_{0}}\mbox{, }W\in\mathcal{V}_{\xi\left(x_{0}\right)}\mbox{ :} \\
& \begin{array}{ll}
1) & \textbf{F}\left(x_{0},\xi\left(x_{0}\right)\right)=\textbf{f}\left(x_{0}\right)\mbox{ :} \\
2) & \textbf{F}: V\times W\rightarrow \mathbb{R}^{K} \mbox{ open} \\
\end{array} \\
\end{array}\label{AssumptionI}
\end{eqnarray}
Note that the condition (\ref{AssumptionI}) above, although more
restrictive than (\ref{Assumption}), allows for the treatment of a
large class of equations.  In particular, each equation of the
form
\begin{eqnarray}
D_{t}\textbf{u}\left(x,t\right) +\textbf{G}\left(x,t,
\textbf{u}\left(x,t\right),...,D^{\alpha}_{x}\textbf{u}
\left(x,t\right),...\right)=\textbf{f}\left(x,t\right) \nonumber
\end{eqnarray}
with the mapping $\textbf{G}$ merely continuous, satisfies
(\ref{AssumptionI}).  Indeed, the mapping $\textbf{F}$ that
defines the equation through (\ref{SystPDE}) is both open and
surjective.  Other classes of equations that satisfy
(\ref{AssumptionI}) can be easily exhibited.

\section{Existence of Generalized Solutions}

The basic existence result for the Order Completion Method
\cite{vdWalt5} is an application of the global approximation
result in Theorem \ref{GApprox}, and the commutative diagram\\
\\
\\
\begin{math}
\setlength{\unitlength}{1cm} \thicklines
%\begin{pspicture}(13,6)
\begin{picture}(13,6)

\put(3.4,5.4){$\mathcal{ML}^{m}_{\textbf{T}}\left(\Omega\right)$}
\put(5.1,5.5){\vector(1,0){6.4}}
\put(11.6,5.4){$\mathcal{ML}^{0}\left(\Omega\right)^{K}$}
\put(7.8,5.9){$\widehat{\textbf{T}}$}
\put(3.8,5.2){\vector(0,-1){3.5}} \put(5.0,1.2){\vector(1,0){6.5}}
\put(3.4,1.1){$\mathcal{NL}_{\textbf{T}}\left(\Omega\right)$}
\put(11.6,1.1){$\mathcal{NL}\left(\Omega\right)^{K}$}
\put(3.5,3.4){$\phi$} \put(12.2,3.4){$\varphi$}
\put(12.0,5.2){\vector(0,-1){3.5}}
\put(7.8,1.4){$\widehat{\textbf{T}}^{\sharp}$}

\end{picture}
%\end{pspicture}
\end{math}\\
\\
Here $\phi$ and $\varphi$ are the uniformly continuous embeddings
associated canonically with the completions
$\mathcal{NL}_{\textbf{T}}\left(\Omega\right)$ and
$\mathcal{NL}\left(\Omega\right)$, and
$\widehat{\textbf{T}}^{\sharp}$ is the extension of
$\widehat{\textbf{T}}$ achieved through uniform continuity.  The
approximation result, Theorem \ref{GApprox}, is used to construct
a Cauchy sequence in
$\mathcal{ML}^{m}_{\textbf{T}}\left(\Omega\right)$ so that its
image under $\widehat{\textbf{T}}$ converges to $\textbf{f}$. This
delivers the \textit{existence} of a solution to the generalized
equation
\begin{eqnarray}
\widehat{\textbf{T}}^{\sharp}\textbf{U}^{\sharp}=\textbf{f}\label{GenEq}
\end{eqnarray}
which we interpret as a generalized solution to (\ref{SystPDE}).
 Moreover, since $\widehat{\textbf{T}}$ is a uniformly
continuous embedding, so is its extension
$\widehat{\textbf{T}}^{\sharp}$ and hence the solution to
(\ref{GenEq}) is \textit{unique}.
\begin{theorem}\label{SolEx}
For every $\textbf{f}\in\mathcal{C}^{0}\left(\Omega\right)^{K}$
that satisfies (\ref{Assumption}), there exists a unique
$\textbf{U}^{\sharp}\in \mathcal{NL}_{\textbf{T}}
\left(\Omega\right)$ so that
\begin{equation}\label{ExtPDEII}
\widehat{\textbf{T}}^{\sharp}\textbf{U}^{\sharp}=\textbf{f}
\end{equation}
\end{theorem}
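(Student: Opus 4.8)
The plan is to realise the strategy sketched in the paragraph preceding the statement: use the global approximation result of Theorem \ref{GApprox} to manufacture a Cauchy sequence in $\mathcal{ML}^{m}_{\textbf{T}}(\Omega)$ whose image under $\widehat{\textbf{T}}$ converges to $\textbf{f}$, and then transport everything to the completion via the commutative diagram. Since $\textbf{f}\in\mathcal{C}^{0}(\Omega)^{K}$ satisfies (\ref{Assumption}), Theorem \ref{GApprox} applies, and for each $n\in\mathbb{N}$ I would take $\epsilon=1/n$ to obtain a closed nowhere dense set $\Gamma_{n}\subset\Omega$ together with lower approximants $\textbf{U}_{n}:=\textbf{U}_{1/n}\in\mathcal{C}^{m}(\Omega\setminus\Gamma_{n})^{K}$. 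After applying the normalisation $(I\circ S)$ componentwise, each $\textbf{U}_{n}$ is regarded, via (\ref{MLDef}), as an element of $\mathcal{ML}^{m}(\Omega)^{K}$, and I set $\textbf{u}_{n}:=q_{\textbf{T}}(\textbf{U}_{n})\in\mathcal{ML}^{m}_{\textbf{T}}(\Omega)$, so that $\widehat{\textbf{T}}\textbf{u}_{n}=\textbf{T}\textbf{U}_{n}$ by commutativity of the diagram defining $\widehat{\textbf{T}}$.

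The crucial step is to show that $(\textbf{T}\textbf{U}_{n})$ converges to $\textbf{f}$ in $\mathcal{ML}^{0}(\Omega)^{K}$. Because the product structure $\mathcal{J}_{o}^{K}$ is componentwise by (\ref{ML0ProdUCS}), it suffices to treat each coordinate. Fixing $i$, the estimate (\ref{ApEq}) with $\epsilon=1/n$ yields $f_{i}-1/n<T_{i}(\cdot,D)\textbf{U}_{n}<f_{i}$ on the dense set $\Omega\setminus\Gamma_{n}$, whence $f_{i}-1/n\le T_{i}(\cdot,D)\textbf{U}_{n}\le f_{i}$ on all of $\Omega$ by (\ref{IneqDSet}). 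Taking $\lambda_{n}=f_{i}-1/n$ and $\mu_{n}=f_{i}$, both continuous and hence in $\mathcal{ML}^{0}(\Omega)$, the sequence $(\lambda_{n})$ is increasing, $(\mu_{n})$ is constant, $\lambda_{n}\le T_{i}(\cdot,D)\textbf{U}_{n}\le\mu_{n}$ for every $n$, and $\sup_{n}\lambda_{n}=f_{i}=\inf_{n}\mu_{n}$. This is exactly the order convergence of $(T_{i}(\cdot,D)\textbf{U}_{n})$ to $f_{i}$ recorded in (\ref{OCStructure}), with $f_{i}\in\mathcal{ML}^{0}(\Omega)$ since $f_{i}$ is continuous.

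The remainder is soft structural bookkeeping. A convergent filter is Cauchy, so the image filter $\widehat{\textbf{T}}(\mathcal{F})$ of the sequence $(\textbf{u}_{n})$ is Cauchy in $\mathcal{ML}^{0}(\Omega)^{K}$; by the definition (\ref{JTDef}) of the initial structure $\mathcal{J}_{\widehat{\textbf{T}}}$, the sequence $(\textbf{u}_{n})$ is then Cauchy in $\mathcal{ML}^{m}_{\textbf{T}}(\Omega)$. Since $\mathcal{NL}_{\textbf{T}}(\Omega)$ is its completion, $(\phi(\textbf{u}_{n}))$ converges to some $\textbf{U}^{\sharp}\in\mathcal{NL}_{\textbf{T}}(\Omega)$. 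Applying the continuous extension $\widehat{\textbf{T}}^{\sharp}$ and using $\widehat{\textbf{T}}^{\sharp}\circ\phi=\varphi\circ\widehat{\textbf{T}}$, I obtain $\widehat{\textbf{T}}^{\sharp}\textbf{U}^{\sharp}=\lim_{n}\varphi(\textbf{T}\textbf{U}_{n})=\varphi(\textbf{f})$, the last equality by continuity of $\varphi$, and uniqueness of limits in the uniformly Hausdorff space $\mathcal{NL}(\Omega)^{K}$. Identifying $\textbf{f}$ with its canonical image $\varphi(\textbf{f})$ gives (\ref{ExtPDEII}). Uniqueness is then immediate: $\widehat{\textbf{T}}^{\sharp}$ is a uniformly continuous embedding, in particular injective, so any two solutions of (\ref{ExtPDEII}) must coincide.

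I expect the main obstacle to be the second paragraph, and within it the technical care of treating the approximants $\textbf{U}_{n}$ as genuine elements of $\mathcal{ML}^{m}(\Omega)^{K}$: one must check that after the normalisation built into the operator (\ref{SystPDEMapComp}) the applied values $T_{i}(\cdot,D)\textbf{U}_{n}$ still obey the inequalities of (\ref{ApEq}) on the dense complement of the varying nowhere dense sets $\Gamma_{n}$, and that the passage from those dense sets to all of $\Omega$ via (\ref{IneqDSet}) is legitimate for the extended-real, normal lower semi-continuous functions involved. Once this bookkeeping is in place the order-convergence verification and the completion argument are routine.
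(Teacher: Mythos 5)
Your proposal is correct and follows essentially the same route the paper takes: the paper itself only sketches this argument (deferring details to \cite{vdWalt5}), namely using Theorem \ref{GApprox} with $\epsilon=1/n$ to build a sequence whose $\widehat{\textbf{T}}$-image order-converges to $\textbf{f}$, hence a Cauchy sequence in $\mathcal{ML}^{m}_{\textbf{T}}\left(\Omega\right)$ under the initial structure $\mathcal{J}_{\widehat{\textbf{T}}}$, with existence from completeness and uniqueness from the injectivity of the extended embedding $\widehat{\textbf{T}}^{\sharp}$. Your filling-in of the order-convergence verification via $\lambda_{n}=f_{i}-1/n$, $\mu_{n}=f_{i}$ and the dense-set inequality (\ref{IneqDSet}) is exactly the intended bookkeeping.
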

The aim of this section is to obtain the existence of generalized
solutions to (\ref{SystPDE}) in the space
$\mathcal{NL}^{m}\left(\Omega\right)^{K}$.  In order to formulate
an extended version of the equation (\ref{SystPDE}) in this
setting, the partial differential operator $\textbf{T}$ must be
extended to the completion of
$\mathcal{ML}^{m}\left(\Omega\right)^{K}$.  The mapping
$\textbf{T}$ must therefore be uniformly continuous with respect
to the uniform convergence structures on
$\mathcal{ML}^{m}\left(\Omega\right)^{K}$ and
$\mathcal{ML}^{0}\left(\Omega\right)^{K}$.  The proof of this
result is deferred to the appendix.
\begin{theorem}\label{TUnifCon}
The mapping
\begin{eqnarray}
\textbf{T}:\mathcal{ML}^{m}\left(\Omega\right)^{K} \rightarrow
\mathcal{ML}^{0}\left(\Omega\right)^{K}\nonumber
\end{eqnarray}
defined in (\ref{SystPDEMap}) to (\ref{SystPDEMapComp}) is
uniformly continuous.
\end{theorem}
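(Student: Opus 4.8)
The plan is to reduce the claim, via the product and initial character of the uniform convergence structures involved, to the uniform continuity of a single superposition operator. Since the target $\mathcal{ML}^0(\Omega)^K$ carries the product structure $\mathcal{J}_o^K$ of (\ref{ML0ProdUCS}), the mapping $\textbf{T}$ is uniformly continuous if and only if each of its components $T_j$ from (\ref{SystPDEMapComp}) is uniformly continuous into $\mathcal{ML}^0(\Omega)$. I would then collect all of the extended partial derivatives into the single map
\[
\textbf{D}:\mathcal{ML}^m(\Omega)^K\ni\textbf{u}\mapsto\left(\ldots,\mathcal{D}^\alpha u_i,\ldots\right)\in\mathcal{ML}^0(\Omega)^M,\quad|\alpha|\leq m,\ i=1,\ldots,K.
\]
Comparing the definition (\ref{JDDef}) of $\mathcal{J}_D$ with the product definition (\ref{ML0ProdUCS}) shows that $\mathcal{J}_D^K$ is exactly the initial uniform convergence structure with respect to $\textbf{D}$, so that $\textbf{D}$ is uniformly continuous into $\mathcal{ML}^0(\Omega)^M$ equipped with $\mathcal{J}_o^M$. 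As each component factors as $T_j=\Phi_j\circ\textbf{D}$, where $\Phi_j$ is the superposition operator $(v_1,\ldots,v_M)\mapsto(I\circ S)(F_j(\cdot,v_1,\ldots,v_M))$, it suffices to prove that every $\Phi_j:\mathcal{ML}^0(\Omega)^M\to\mathcal{ML}^0(\Omega)$ is uniformly continuous.

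For the uniform continuity of $\Phi_j$ I would take an arbitrary filter $\mathcal{U}\in\mathcal{J}_o^M$, aiming to show that $(\Phi_j\times\Phi_j)(\mathcal{U})\in\mathcal{J}_o$, and unpack Definition \ref{JoDef}: in each coordinate $p$ the projection of $\mathcal{U}$ contains the filter generated by a decreasing sequence of nonempty order intervals $[\lambda_n^p,\mu_n^p]$. On the open dense set where all the $\lambda_n^p$ and $\mu_n^p$ are finite I would form the pointwise envelopes
\[
\underline{F}_n(x)=\inf\{F_j(x,\eta_1,\ldots,\eta_M):\eta_p\in[\lambda_n^p(x),\mu_n^p(x)]\},\quad\overline{F}_n(x)=\sup\{F_j(x,\eta_1,\ldots,\eta_M):\eta_p\in[\lambda_n^p(x),\mu_n^p(x)]\},
\]
and set $\Lambda_n=(I\circ S)\underline{F}_n$ and $\Upsilon_n=(I\circ S)\overline{F}_n$, which lie in $\mathcal{ML}^0(\Omega)$ by the continuity of $F_j$. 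If $\textbf{v}$ has each $v_p\in[\lambda_n^p,\mu_n^p]$, then $\underline{F}_n\leq F_j(\cdot,\textbf{v})\leq\overline{F}_n$ on the dense set, so by the monotonicity of $I\circ S$ together with property (\ref{IneqDSet}) one obtains $\Phi_j(\textbf{v})\in[\Lambda_n,\Upsilon_n]$. Since the source intervals are nested, so are the intervals $[\Lambda_n,\Upsilon_n]$; consequently $(\Phi_j\times\Phi_j)(\mathcal{U})$ refines the filter generated by $\{[\Lambda_n,\Upsilon_n]:n\in\mathbb{N}\}$, which establishes condition 3 of Definition \ref{JoDef} for the image filter.

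It remains to verify the local condition 2 of Definition \ref{JoDef} for the intervals $[\Lambda_n,\Upsilon_n]$, and this is where I expect the principal difficulty to lie, precisely because $\textbf{F}$ is not assumed monotone, so that $\Phi_j$ is not monotone and the bounds above cannot be replaced by $\Phi_j$ evaluated at the endpoints. Fix an open $V\subseteq\Omega$. If the source intervals already have empty intersection over $V$ in some coordinate, then $\bigcap_n[\Lambda_n,\Upsilon_n]_{|V}=\emptyset$ follows. Otherwise each coordinate localizes to a single function $v_p\in\mathcal{ML}^0(V)$, and on a dense subset $D$ of $V$ every $v_p$ is finite and continuous with $\lambda_n^p(x),\mu_n^p(x)\to v_p(x)$. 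The joint continuity of $F_j$, used in its uniformly continuous form on a compact neighbourhood of the relevant values, then forces $\overline{F}_n(x)-\underline{F}_n(x)\to 0$ for $x\in D$, so that $\sup_n\Lambda_n=\inf_n\Upsilon_n$ on $D$. By property (\ref{IneqDSet}) this equality propagates from the dense set $D$ to all of $V$, whence $\bigcap_n[\Lambda_n,\Upsilon_n]_{|V}$ is the single function $(I\circ S)(F_j(\cdot,v_1,\ldots,v_M))\in\mathcal{ML}^0(V)$. Carrying the same construction through each of the finitely many interval systems permitted in Definition \ref{JoDef} then yields $(\Phi_j\times\Phi_j)(\mathcal{U})\in\mathcal{J}_o$, completing the proof that $\textbf{T}$ is uniformly continuous.
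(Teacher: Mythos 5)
Your overall strategy is the paper's: factor $\textbf{T}$ as a superposition operator composed with the derivative map $\textbf{D}$, note that $\textbf{D}$ is uniformly continuous because $\mathcal{J}_{D}^{K}$ is initial with respect to it, and reduce the whole theorem to showing that a nested system of order intervals in $\mathcal{ML}^{0}(\Omega)^{M}$ is carried into a nested system of order intervals in $\mathcal{ML}^{0}(\Omega)$ satisfying Definition \ref{JoDef}. Where you differ is in how the image intervals are manufactured: the paper takes $\lambda_{n}^{j}=\inf\{\overline{F}_{j}\textbf{u}\ :\ \textbf{u}\in\prod_{i=1}^{M}I_{n}^{i}\}$ and the corresponding supremum inside the Dedekind complete lattice $\mathcal{NL}(\Omega)$ (via Proposition \ref{NLBounded}), proves their collapse using Lemma \ref{IncPW} together with the lattice proposition, and then returns to $\mathcal{ML}^{0}$ endpoints via Proposition \ref{MLDense}; your pointwise envelopes $\underline{F}_{n},\overline{F}_{n}$ over the compact boxes $\prod_{p}[\lambda_{n}^{p}(x),\mu_{n}^{p}(x)]$ accomplish the same thing more concretely and hand you $\mathcal{ML}^{0}$ endpoints directly. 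That is a legitimate, and in places cleaner, variant of the same argument.

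There is, however, one step that is false as written: the claim that if $\bigcap_{n}I^{p}_{n|V}=\emptyset$ in some source coordinate then $\bigcap_{n}[\Lambda_{n},\Upsilon_{n}]_{|V}=\emptyset$. Take $F_{j}$ independent of the $p$-th variable (in the extreme, $F_{j}\equiv 0$): then $[\Lambda_{n},\Upsilon_{n}]$ collapses to a singleton no matter what the $p$-th coordinate does, so the image intersection is nonempty. The conclusion you actually need in this branch --- empty \emph{or} a singleton --- may well still hold, but you have supplied no argument for it; the empty-intersection case is non-vacuous, since the source endpoints can converge to an element of $\mathcal{NL}(V)\setminus\mathcal{ML}^{0}(V)$. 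The correct repair is to run the dense-set collapse argument of your second branch in every case where the image intersection is nonempty: one must show that condition 2 for the source system forces $\sup_{n}\lambda_{n}^{p}(x)=\inf_{n}\mu_{n}^{p}(x)$ on a dense subset of $V$ (this is precisely the role of Lemma \ref{IncPW} in the paper), after which local uniform continuity of $F_{j}$ and (\ref{IneqDSet}) give that any two functions trapped in all the $[\Lambda_{n},\Upsilon_{n}]_{|V}$ coincide. In fairness, the paper's own proof also only treats the case where the source intervals shrink to a single $w^{j}\in\mathcal{NL}(\Omega)$, so apart from this one explicitly incorrect implication your proposal matches its level of rigour.
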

In view of Theorem \ref{TUnifCon} the mapping $\textbf{T}$ extends
uniquely to a uniformly continuous mapping
\begin{eqnarray}
\textbf{T}^{\sharp}:\mathcal{NL}^{m}\left(\Omega\right)^{K}
\rightarrow \mathcal{NL}\left(\Omega\right)^{K}\label{ExtPDEMapI}
\end{eqnarray}
so that the diagram\\
\\
\\
\begin{math}
\setlength{\unitlength}{1cm} \thicklines
%\begin{pspicture}(13,6)
\begin{picture}(13,6)

\put(3.4,5.4){$\mathcal{ML}^{m}\left(\Omega\right)^{K}$}
\put(5.1,5.5){\vector(1,0){6.4}}
\put(11.6,5.4){$\mathcal{ML}^{0}\left(\Omega\right)^{K}$}
\put(7.8,5.9){$\textbf{T}$} \put(3.8,5.2){\vector(0,-1){3.5}}
\put(5.0,1.2){\vector(1,0){6.5}}
\put(3.4,1.1){$\mathcal{NL}^{m}\left(\Omega\right)^{K}$}
\put(11.6,1.1){$\mathcal{NL}\left(\Omega\right)^{K}$}
\put(3.5,3.4){$\psi$} \put(12.2,3.4){$\varphi$}
\put(12.0,5.2){\vector(0,-1){3.5}}
\put(7.8,1.4){$\textbf{T}^{\sharp}$}

\end{picture}
%\end{pspicture}
\end{math}\\
\\
commutes, with $\psi$ and $\varphi$ the uniformly continuous
embeddings associated with the completions of
$\mathcal{ML}^{m}\left(\Omega\right)^{K}$ and
$\mathcal{ML}^{0}\left(\Omega\right)^{K}$, respectively. Therefore
we are justified in formulating the generalized equation
\begin{eqnarray}
\textbf{T}^{\sharp}\textbf{u}^{\sharp}=\textbf{f}\label{ExtSystPDE}
\end{eqnarray}
where the unknown $\textbf{u}^{\sharp}$ ranges over
$\mathcal{NL}^{m}\left(\Omega\right)^{K}$.
\begin{theorem}\label{SolExIII}
For every $\textbf{f}\in\mathcal{C}^{0}\left(\Omega\right)^{K}$ so
that the system of PDEs (\ref{SystPDE}) satisfies
(\ref{AssumptionI}), there is some $\textbf{u}^{\sharp}\in
\mathcal{NL}^{m} \left(\Omega\right)^{K}$ so that
\begin{equation}\label{ExtPDEIV}
\textbf{T}^{\sharp}\textbf{u}^{\sharp}=\textbf{f}
\end{equation}
\end{theorem}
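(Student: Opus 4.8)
The plan is to run the same completion argument that underlies Theorem~\ref{SolEx}, but now carried out in $\mathcal{NL}^{m}\left(\Omega\right)^{K}$, which is the completion of $\mathcal{ML}^{m}\left(\Omega\right)^{K}$ with respect to the uniform convergence structure $\mathcal{J}_{D}^{K}$. Concretely, I would construct a sequence $\left(\textbf{u}_{n}\right)$ in $\mathcal{ML}^{m}\left(\Omega\right)^{K}$ which is Cauchy with respect to $\mathcal{J}_{D}^{K}$ and whose image $\left(\textbf{T}\textbf{u}_{n}\right)$ converges to $\textbf{f}$ in $\mathcal{ML}^{0}\left(\Omega\right)^{K}$. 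Since $\mathcal{NL}^{m}\left(\Omega\right)^{K}$ is complete, the embedded sequence $\left(\psi\left(\textbf{u}_{n}\right)\right)$ converges to some $\textbf{u}^{\sharp}\in\mathcal{NL}^{m}\left(\Omega\right)^{K}$. The uniform continuity of $\textbf{T}^{\sharp}$ from Theorem~\ref{TUnifCon}, together with the commutativity of the diagram associated with $\textbf{T}^{\sharp}$ in (\ref{ExtPDEMapI}), then yields $\textbf{T}^{\sharp}\textbf{u}^{\sharp}=\lim\textbf{T}\textbf{u}_{n}=\textbf{f}$, which is exactly (\ref{ExtPDEIV}). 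The whole difficulty is thereby shifted to producing a sequence that is Cauchy in the \emph{domain} structure $\mathcal{J}_{D}^{K}$, and not merely one whose images converge; this is precisely the gap that the remark following Theorem~\ref{GApprox} shows cannot be closed under (\ref{Assumption}) alone, and for which the stronger hypothesis (\ref{AssumptionI}) is introduced.

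The first step is to sharpen the local approximation of Proposition~\ref{LocApprox} by exploiting the openness in (\ref{AssumptionI}). Fix $x_{0}\in\Omega$ and let $\xi\left(x_{0}\right)$, $V$ and $W$ be as in (\ref{AssumptionI}), so that $\textbf{F}\left(x_{0},\xi\left(x_{0}\right)\right)=\textbf{f}\left(x_{0}\right)$ and $\textbf{F}:V\times W\rightarrow\mathbb{R}^{K}$ is open. For a prescribed neighbourhood $N$ of $\left(x_{0},\xi\left(x_{0}\right)\right)$ in $V\times W$, openness gives $\rho>0$ with $B\left(\textbf{f}\left(x_{0}\right),\rho\right)\subseteq\textbf{F}\left(N\right)$, so every target within $\rho$ of $\textbf{f}\left(x_{0}\right)$ admits a preimage inside $N$. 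Hence for $\epsilon>0$ small enough the vector with components $f_{i}\left(x_{0}\right)-\frac{\epsilon}{2}$, $i=1,\ldots,K$, has a preimage $\left(x_{\epsilon},\xi_{\epsilon}\right)\in N$, and by shrinking $N$ as $\epsilon\rightarrow 0$ we may arrange $\left(x_{\epsilon},\xi_{\epsilon}\right)\rightarrow\left(x_{0},\xi\left(x_{0}\right)\right)$. Choosing polynomials $P_{1},\ldots,P_{K}$ with $D^{\alpha}P_{i}\left(x_{\epsilon}\right)=\left(\xi_{\epsilon}\right)_{i\alpha}$ gives $T_{i}\left(x_{\epsilon},D\right)\textbf{P}\left(x_{\epsilon}\right)=f_{i}\left(x_{0}\right)-\frac{\epsilon}{2}$, and continuity of $\textbf{F}$ and $\textbf{f}$ upgrades this to $f_{i}\left(x\right)-\epsilon<T_{i}\left(x,D\right)\textbf{P}\left(x\right)<f_{i}\left(x\right)$ on a ball $B\left(x_{0},\delta\right)$, exactly as in Proposition~\ref{LocApprox}. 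The decisive new feature is that the derivative data $D^{\alpha}P_{i}$ at the centre now converge to the \emph{fixed} value $\xi\left(x_{0}\right)$ as $\epsilon\rightarrow 0$; under (\ref{Assumption}) alone these data could be drawn from an unbounded level set and need not converge. The upper approximations are produced symmetrically, as in Proposition~\ref{LocApproxII}.

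Next I would globalise exactly as in Theorem~\ref{GApprox}, using a compact tiling of $\Omega$ and the local approximations above to obtain, for each $\epsilon>0$, functions $\textbf{U}_{\epsilon},\textbf{V}_{\epsilon}\in\mathcal{C}^{m}\left(\Omega\setminus\Gamma_{\epsilon}\right)^{K}\subset\mathcal{ML}^{m}\left(\Omega\right)^{K}$ satisfying (\ref{ApEq}), but now with the additional property inherited from the local construction that, over each tile, all derivatives $D^{\alpha}U_{\epsilon,i}$ and $D^{\alpha}V_{\epsilon,i}$ with $|\alpha|\leq m$ stay within a controlled distance of the corresponding fixed targets. Setting $\textbf{U}_{n}=\textbf{U}_{1/n}$ and $\textbf{V}_{n}=\textbf{V}_{1/n}$ and passing to successive suprema and infima in the standard way, one obtains an increasing lower sequence and a decreasing upper sequence whose derivatives $\mathcal{D}^{\alpha}U_{n,i}$ and $\mathcal{D}^{\alpha}V_{n,i}$ are order bounded and squeezed together; by the characterisation (\ref{OCStructure}) each such derivative sequence converges in $\mathcal{J}_{o}$ to a common limit, so that by (\ref{JDDef}) the sequence is Cauchy with respect to $\mathcal{J}_{D}^{K}$. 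The unavoidable bookkeeping over the nowhere dense exceptional sets $\Gamma_{\epsilon}$ is handled by property (\ref{IneqDSet}), which promotes inequalities and identities established on dense subsets of $\Omega$ to all of $\Omega$.

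The main obstacle is the middle step: converting the openness hypothesis (\ref{AssumptionI}) into genuine control of the derivatives of the approximating functions, hence into Cauchyness in $\mathcal{J}_{D}^{K}$ rather than mere convergence of the images $\textbf{T}\textbf{u}_{n}$. The subtle point is that (\ref{AssumptionI}) asserts openness only of the \emph{joint} map $\textbf{F}$ on $V\times W$, so the preimages $\left(x_{\epsilon},\xi_{\epsilon}\right)$ supplied by openness need not have first coordinate equal to $x_{0}$; the argument must therefore centre the approximating polynomials at the perturbed points $x_{\epsilon}$ and absorb the discrepancy $x_{\epsilon}\rightarrow x_{0}$ through the continuity of $\textbf{F}$ and $\textbf{f}$, while keeping $\xi_{\epsilon}\rightarrow\xi\left(x_{0}\right)$. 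Once this convergence of the derivative data is secured uniformly across the tiling, the remaining passage to monotone sandwiching sequences and the verification of the Cauchy condition are routine within the order completion framework.
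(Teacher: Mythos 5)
Your proposal follows essentially the same route as the paper's proof: you reduce the problem to constructing a sequence in $\mathcal{ML}^{m}\left(\Omega\right)^{K}$ that is Cauchy for $\mathcal{J}_{D}^{K}$ with images converging to $\textbf{f}$, use the openness in (\ref{AssumptionI}) to pin the derivative data $D^{\alpha}U_{i}$ near a fixed $\xi\left(x_{0}\right)$ on each tile of a compact tiling, squeeze the derivatives between monotone order bounds to get Cauchyness, and conclude via the uniform continuity of $\textbf{T}$ from Theorem \ref{TUnifCon}. This matches the paper's argument (the paper phrases the local step as containment of the graph of $\textbf{f}$ in the interior of the image of the graph map, which keeps the spatial coordinate fixed and thereby avoids the perturbed-centre issue you flag, but this is only a presentational difference).
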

\begin{proof}
Set
\begin{equation}
\Omega=\bigcup_{\nu\in\mathbb{N}}C_{\nu}\nonumber
\end{equation}
where, for $\nu\in\mathbb{N}$, the compact sets $C_{\nu}$ are
$n$-dimensional intervals
\begin{equation}
C_{\nu}=[a_{\nu},b_{\nu}]\nonumber
\end{equation}
with $a_{\nu}=\left(a_{\nu,1},...,a_{\nu,n}\right)$,
$b_{\nu}=\left(b_{\nu,1},...,b_{\nu,n}\right)\in\mathbb{R}^{n}$
and $a_{\nu,j}\leq b_{\nu,j}$ for every $j=1,...,n$.  We also
assume that the $C_{\nu}$, with $\nu\in\mathbb{N}$ are locally
finite, that is,
\begin{equation}\label{AE2}
\begin{array}{ll}
\forall & x\in\Omega\mbox{ :} \\
\exists & V\subseteq\Omega\mbox{ a neighborhood of $x$ :} \\
& \{\nu\in\mathbb{N}\mbox{ : }C_{\nu}\cap
V\neq\emptyset\}\mbox{ is finite} \\
\end{array}\nonumber
\end{equation}
We also assume that the interiors of $C_{\nu}$, with
$\nu\in\mathbb{N}$, are pairwise disjoint.\\
Let $C_{\nu}$ be arbitrary but fixed.  In view of
(\ref{AssumptionI}) and the continuity of $\textbf{f}$, we have
\begin{eqnarray}
\begin{array}{ll}
\forall & x_{0}\in C_{\nu}\mbox{ :} \\
\exists & \xi\left(x_{0}\right)\in\mathbb{R}^{M}\mbox{, }\textbf{F}\left(x_{0},\xi\left(x_{0}\right)\right)=\textbf{f}\left(x_{0}\right)\mbox{ :} \\
\exists & \delta\mbox{, }\epsilon>0\mbox{ :} \\
& \begin{array}{ll} 1) &
\left\{\left(x,\textbf{f}\left(x\right)\right)\mbox{ :
}\|x-x_{0}\|<\delta \right\}\subset
\textrm{int}\left\{\left(x,\textbf{F}\left(x,\xi\right)\right)\begin{array}{|l}
\|x-x_{0}\|<\delta \\
\|\xi-\xi\left(x_{0}\right)\|<\epsilon \\
\end{array}\right\} \\
2) & \textbf{F}: B_{\delta}\left(x_{0}\right)\times B_{2\epsilon}\left(\xi\left(x_{0}\right)\right) \rightarrow \mathbb{R}^{K}\mbox{ open} \\
\end{array} \\
\end{array}\label{LocSolI}
\end{eqnarray}
For each $x_{0}\in C_{\nu}$, fix
$\xi\left(x_{0}\right)\in\mathbb{R}^{M}$ in (\ref{LocSolI}).
Since $C_{\nu}$ is compact, it follows from (\ref{LocSolI}) that
\begin{eqnarray}
\begin{array}{ll}
\exists & \delta>0\mbox{ :} \\
\forall & x_{0}\in C_{\nu}\mbox{ :} \\
\exists & \epsilon>0\mbox{ :} \\
& \begin{array}{ll} 1) &
\left\{\left(x,\textbf{f}\left(x\right)\right)\mbox{ :
}\|x-x_{0}\|<\delta \right\}\subset
\textrm{int}\left\{\left(x,\textbf{F}\left(x,\xi\right)\right)\begin{array}{|l}
\|x-x_{0}\|<\delta \\
\|\xi-\xi\left(x_{0}\right)\|<\epsilon \\
\end{array}\right\} \\
2) & \textbf{F}: B_{\delta}\left(x_{0}\right)\times B_{2\epsilon}\left(\xi\left(x_{0}\right)\right) \rightarrow \mathbb{R}^{K}\mbox{ open} \\
\end{array} \\
\end{array}\label{LocSolII}
\end{eqnarray}
Subdivide $C_{\nu}$ into $n$-dimensional intervals
$I_{\nu,1},...,I_{\nu,\mu_{\nu}}$ with diameter not exceeding
$\delta$ such that their interiors are pairwise disjoint.  If
$a_{\nu,j}$ with $j=1,...,\mu_{\nu}$ is the center of the interval
$I_{\nu,j}$ then by (\ref{LocSolII}) we have
\begin{eqnarray}
\begin{array}{ll}
\forall & j=1,...,\mu_{\nu}\mbox{ :} \\
\exists & \epsilon_{\nu,j}>0\mbox{ :} \\
& \begin{array}{ll} 1) &
\left\{\left(x,\textbf{f}\left(x\right)\right)\mbox{ : }x\in
I_{\nu,j} \right\}\subset
\textrm{int}\left\{\left(x,\textbf{F}\left(x,\xi\right)\right)\begin{array}{|l}
x\in I_{\nu,j} \\
\|\xi-\xi\left(a_{\nu,j}\right)\|<\epsilon_{\nu,j} \\
\end{array}\right\} \\
2) & \textbf{F}: I_{\nu,j}\times B_{2\epsilon_{\nu,j}}\left(\xi\left(a_{\nu,j}\right)\right) \rightarrow \mathbb{R}^{K}\mbox{ open} \\
\end{array} \\
\end{array}\label{LocSolIII}
\end{eqnarray}
Take $\gamma>0$ arbitrary but fixed.  In view of Proposition
\ref{LocApprox} and (\ref{LocSolIII}), we have
\begin{eqnarray}
\begin{array}{ll}
\forall & x_{0}\in I_{\nu,j}\mbox{ :} \\
\exists & \textbf{U}_{x_{0}}=\textbf{U}\in\mathcal{C}^{m}\left(\mathbb{R}^{n}\right)^{K}\mbox{ :} \\
\exists & \delta=\delta_{x_{0}}>0\mbox{ :} \\
\forall & i=1,...,K\mbox{ :} \\
& x\in B_{\delta}\left(x_{0}\right)\cap
I_{\nu,j}\Rightarrow\left(\begin{array}{ll}
1) &  D^{\alpha}U_{i}\left(x\right)\in B_{\epsilon_{\nu,j}}\left(\xi\left(a_{\nu,j}\right)\right)\mbox{, }|\alpha|\leq m \\
2) & f_{i}\left(x\right)-\gamma< T_{i}\left(x,D\right)\textbf{U}\left(x\right)<f_{i}\left(x\right) \\
\end{array}\right) \\
\end{array}\nonumber
\end{eqnarray}
As above, we may subdivide $I_{\nu,j}$ into pairwise disjoint,
$n$-dimensional intervals $J_{\nu,j,1},...,J_{\nu,j,\mu_{\nu,j}}$
so that for $k=1,...,\mu_{\nu,j}$ we have
\begin{eqnarray}
\begin{array}{ll}
\exists & \textbf{U}^{\nu,j,k}=\textbf{U}\in\mathcal{C}^{m}\left(\mathbb{R}^{n}\right)^{K}\mbox{ :} \\
\forall & i=1,...,K\mbox{ :} \\
& x\in J_{\nu,j,k}\Rightarrow\left(\begin{array}{ll}
1) &  D^{\alpha}U_{i}\left(x\right)\in B_{\epsilon_{\nu,j}}\left(\xi\left(a_{\nu,j}\right)\right)\mbox{, }|\alpha|\leq m \\
2) & f_{i}\left(x\right)-\gamma< T_{i}\left(x,D\right)\textbf{U}\left(x\right)<f_{i}\left(x\right) \\
\end{array}\right) \\
\end{array}\label{AA1}
\end{eqnarray}
Set
\begin{eqnarray}
\textbf{V}_{1}=\sum_{\nu\in\mathbb{N}}\left(\sum_{j=1}^{\mu_{\nu}}
\left(\sum_{k=1}^{\mu_{\nu,j}}\chi_{J_{\nu,j,k}}\textbf{U}_{\nu,j,k}\right)\right)
\nonumber
\end{eqnarray}
where $\chi_{J_{\nu,j,k}}$ is the characteristic function of
$J_{\nu,j,k}$, and
\begin{eqnarray}
\Gamma_{1}=\Omega\setminus\left(\bigcup_{\nu\in\mathbb{N}}
\left(\bigcup_{j=1}^{\mu_{\nu}}\left(\bigcup_{k=1}^{\mu_{\nu,j}}
\textrm{int}J_{\nu,j,k} \right)\right)\right).
\end{eqnarray}
Then $\Gamma_{1}$ is closed nowhere dense, and $\textbf{V}_{1}\in
\mathcal{C}^{m}\left(\Omega\setminus\Gamma_{1}\right)^{K}$.  In
view of (\ref{AA1}) we have, for each $i=1,...,K$
\begin{eqnarray}
f_{i}\left(x\right)-\gamma<
T_{i}\left(x,D\right)\textbf{V}_{1}\left(x\right)<
f_{i}\left(x\right)\mbox{, }x\in\Omega\setminus\Gamma_{1}\nonumber
\end{eqnarray}
Furthermore, for each $\nu\in\mathbb{N}$, for each
$j=1,...,\mu_{\nu}$, each $k=1,...,\mu_{\nu,j}$, each
$|\alpha|\leq m$ and every $i=1,...,K$ we have
\begin{eqnarray}
x\in \textrm{int}J_{\nu,j,k} \Rightarrow
\xi^{\alpha}_{i}\left(a_{\nu,j}\right)-\epsilon<
D^{\alpha}V_{1,i}\left(x\right)<
\xi^{\alpha}_{i}\left(a_{j}\right)+\epsilon \nonumber
\end{eqnarray}
Therefore the functions $\lambda_{1,i}^{\alpha},\mu_{1,i}^{\alpha}
\in \mathcal{C}^{0}\left(\Omega\setminus\Gamma_{1}\right)$ defined
as
\begin{eqnarray}
\lambda_{1,i}^{\alpha}\left(x\right) =
\xi^{\alpha}_{i}\left(a_{j}\right)-2\epsilon_{\nu,j}\mbox{ if }
x\in \textrm{int}I_{\nu,j}\nonumber
\end{eqnarray}
and
\begin{eqnarray}
\mu_{1,i}^{\alpha}\left(x\right) =
\xi^{\alpha}_{i}\left(a_{j}\right)+2\epsilon_{\nu,j}\mbox{ if }
x\in \textrm{int}I_{\nu,j}\nonumber
\end{eqnarray}
satisfies
\begin{eqnarray}
\lambda_{1,i}^{\alpha}\left(x\right) <
D^{\alpha}V_{1,i}\left(x\right) <
\mu_{1,i}^{\alpha}\left(x\right)\mbox{,
}x\in\Omega\setminus\Gamma_{1}\nonumber
\end{eqnarray}
and
\begin{eqnarray}
\mu_{1,i}^{\alpha}\left(x\right) -
\lambda_{1,i}^{\alpha}\left(x\right) <4\epsilon_{\nu,j}\mbox{,
}x\in\textrm{int}I_{\nu,j} \nonumber
\end{eqnarray}
Applying (\ref{LocSolIII}), and proceeding in a fashion similar as
above, we may construct, for each $n\in\mathbb{N}$ such that
$n>1$, a closed nowhere dense set $\Gamma_{n}\subset\Omega$, a
function
$\textbf{V}_{n}\in\mathcal{C}^{m}\left(\Omega\setminus\Gamma_{n}
\right)^{K}$ and functions $\lambda_{n,i}^{\alpha},
\mu_{n,i}^{\alpha}\in\mathcal{C}^{0}\left(\Omega\setminus\Gamma_{n}
\right)$ so that, for each $i=1,...,K$ and $|\alpha|\leq m$
\begin{eqnarray}
f_{i}\left(x\right)-\frac{\gamma}{n}<
T_{i}\left(x,D\right)\textbf{V}_{n}\left(x\right)<
f_{i}\left(x\right)\mbox{,
}x\in\Omega\setminus\left(\Gamma_{n}\cup \Gamma_{n-1}\right).
\label{EQ1}
\end{eqnarray}
Furthermore,
\begin{eqnarray}
\lambda_{n-1,i}^{\alpha}\left(x\right)<\lambda_{n,i}^{\alpha}\left(x\right)
< D^{\alpha}V_{n,i} \left(x\right)<
\mu_{n,i}^{\alpha}\left(x\right)<\mu_{n-1,i}^{\alpha}\left(x\right)\mbox{,
}x\in\Omega\setminus\Gamma_{n}\label{EQ2}
\end{eqnarray}
and
\begin{eqnarray}
\mu_{n,i}^{\alpha}\left(x\right) -
\lambda_{n,i}^{\alpha}\left(x\right) <
\frac{4\epsilon_{\nu,j}}{n}\mbox{, }x\in
\left(\textrm{int}I_{\nu,j}\right)\cap\left(\Omega\setminus\Gamma_{n}\right).
\label{EQ3}
\end{eqnarray}
For each $n\in\mathbb{N}$ and $i=1,...,K$ set $v_{n,i}
=\left(I\circ S\right)\left(V_{n,i}\right)$.  Furthermore, for
each $|\alpha|\leq m$ set $\overline{\lambda}_{n,i}^{\alpha}
=\left(I\circ S\right)\left(\lambda_{n,i}^{\alpha}\right)$ and
$\overline{\mu}_{n,i}'^{\alpha} =\left(I\circ
S\right)\left(\mu_{n,i}^{\alpha}\right)$.  Then, in view of
(\ref{EQ1}) we have
\begin{eqnarray}
\begin{array}{ll}
\forall & n\in\mathbb{N}\mbox{ :} \\
\forall & i=1,...,K\mbox{ :} \\
& f_{i}-\frac{\gamma}{n}\leq T_{i}\textbf{v}_{n}\leq f_{i} \\
\end{array}\nonumber
\end{eqnarray}
and from (\ref{EQ2}) it follows that
\begin{eqnarray}
\begin{array}{ll}
\forall & n\in\mathbb{N}\mbox{ :} \\
\forall & i=1,...,K\mbox{ :} \\
\forall & |\alpha|\leq m\mbox{ :} \\
& \overline{\lambda}_{n,i}^{\alpha}\leq \overline{\lambda}_{n+1,i}^{\alpha}\leq \mathcal{D}^{\alpha}v_{n,i}\leq \overline{\mu}_{n+1,i}^{\alpha}\leq\overline{\mu}_{n,i}^{\alpha} \\
\end{array}.\nonumber
\end{eqnarray}
Moreover, in view of (\ref{EQ3}) it follows that
\begin{eqnarray}
\overline{\mu}_{n,i}^{\alpha}\left(x\right)-
\overline{\lambda}_{n,i}^{\alpha}\left(x\right)
<\frac{4\epsilon_{\nu,j}}{n}\mbox{, }x\in \textrm{int}I_{\nu,j}.
\nonumber
\end{eqnarray}
Therefore $\left(\textbf{v}_{n}\right)$ is a Cauchy sequence in
$\mathcal{ML}^{m}\left(\Omega\right)^{K}$, and
$\left(\textbf{Tv}_{n}\right)$ converges to $\textbf{f}$ in
$\mathcal{ML}^{0}\left(\Omega\right)^{K}$.  The result now follows
by Theorem \ref{TUnifCon}.
\end{proof}\\ \\

\section{The Structure of Generalized Functions}

Recall \cite{vdWalt5} that, in view of the abstract construction
of the completion of a uniform convergence space \cite{Wyler}, the
unique solution to the generalized equation (\ref{GenEq}) may be
represented as the equivalence class of Cauchy filters on
$\mathcal{ML} ^{m} _{\textbf{T}} \left(\Omega\right)$
\begin{eqnarray}
\left\{\mathcal{F}\mbox{ a filter on
}\mathcal{ML}^{m}_{\textbf{T}}\left(\Omega\right)\mbox{ : }
\widehat{\textbf{T}}\left(\mathcal{F}\right)\mbox{ converges to
}\textbf{f} \right\}\label{SolChar}
\end{eqnarray}
That is, it consists of the totality of all filters $\mathcal{F}$
on $\mathcal{ML}^{m}_{\textbf{T}}\left(\Omega\right)$ so that
$\widehat{\textbf{T}}\left(\mathcal{F}\right)$ converges to
$\textbf{f}$ in $\mathcal{ML}^{0}\left(\Omega\right)^{K}$.
Moreover, each classical solution
$\textbf{u}\in\mathcal{C}^{m}\left(\Omega\right)^{K}$, and also
each nonclassical solution
$\textbf{u}\in\mathcal{ML}^{m}\left(\Omega\right)^{K}$ to
(\ref{SystPDE}) generates a Cauchy filter in
$\mathcal{ML}^{m}_{\textbf{T}}\left(\Omega\right)$ which belongs
to the equivalence class (\ref{SolChar}).  Therefore the
generalized solution $\textbf{U}^{\sharp}\in
\mathcal{NL}_{\textbf{T}}\left(\Omega\right)$ is consistent with
the usual classical solutions as well as the nonclassical
solutions in $\textbf{u}\in\mathcal{ML} ^{m}\left(\Omega\right)
^{K}$.  This may be represented in the commutative diagram\\
\\
\\
\begin{math}
\setlength{\unitlength}{1cm} \thicklines
%\begin{pspicture}(13,6)
\begin{picture}(13,6)

\put(3.4,5.2){$\mathcal{ML}^{m}\left(\Omega\right)^{K}$}
\put(5.1,5.3){\vector(1,0){6.4}}
\put(11.6,5.2){$\mathcal{ML}^{0}\left(\Omega\right)^{K}$}
\put(7.2,0.9){$\mathcal{ML}^{m}_{\textbf{T}}\left(\Omega\right)$}
\put(7.8,5.7){$\textbf{T}$} \put(4.0,5.0){\vector(1,-1){3.5}}
\put(5.0,3.2){$q_{\textbf{T}}$}
\put(10.7,3.2){$\widehat{\textbf{T}}$}
\put(8.5,1.5){\vector(1,1){3.5}}

\end{picture}
%\end{pspicture}
\end{math}\\
\\
where $q_{\textbf{T}}$ is the canonical quotient map associated
with the equivalence relation (\ref{TEquiv}).  In view of this
diagram it appears that the mapping $\widehat{\textbf{T}}$ is
nothing but a \textit{representation} of the usual nonlinear
partial differential operator $\textbf{T}$. By virtue of the
uniform continuity of the mapping $\textbf{T}$, we obtain a
similar representation for the extended operator
$\textbf{T}^{\sharp}$.\\
\\
\\
\begin{math}
\setlength{\unitlength}{1cm} \thicklines
%\begin{pspicture}(13,6)
\begin{picture}(13,6)

\put(3.4,5.2){$\mathcal{NL}^{m}\left(\Omega\right)^{K}$}
\put(5.1,5.3){\vector(1,0){6.4}}
\put(11.6,5.2){$\mathcal{NL}\left(\Omega\right)^{K}$}
\put(7.2,0.9){$\mathcal{NL}_{\textbf{T}^{\sharp}}\left(\Omega\right)$}
\put(7.8,5.7){$\textbf{T}^{\sharp}$}
\put(4.0,5.0){\vector(1,-1){3.5}}
\put(5.0,3.2){$q_{\textbf{T}}^{\sharp}$}
\put(10.7,3.2){$\widehat{\textbf{T}}^{\sharp}$}
\put(8.5,1.5){\vector(1,1){3.5}}

\end{picture}
%\end{pspicture}
\end{math}\\
\\
In view of this diagram, every generalized solution
$\textbf{u}^{\sharp} \in \mathcal{NL}^{m}\left(\Omega\right)^{K}$
to (\ref{SystPDE}) is mapped unto the unique generalized solution
in $\mathcal{NL}_{\textbf{T}^{\sharp}}\left(\Omega\right)$.  As
such, these two concepts of generalized solution are consistent.
Furthermore,
\begin{eqnarray}
\begin{array}{ll}
\forall & \textbf{u}^{\sharp}\mbox{, }\textbf{v}^{\sharp}\in\mathcal{NL}^{m}\left(\Omega\right)^{K}\mbox{ :} \\
& \textbf{T}^{\sharp}\textbf{u}^{\sharp}=\textbf{T}^{\sharp}\textbf{v}^{\sharp}\Rightarrow q_{\textbf{T}}^{\sharp}\textbf{u}^{\sharp}=q_{\textbf{T}}^{\sharp}\textbf{v}^{\sharp} \\
\end{array}\nonumber
\end{eqnarray}
so that the space
$\mathcal{NL}_{\textbf{T}^{\sharp}}\left(\Omega\right)$ contains
the quotient space
$\mathcal{NL}_{\textbf{T}^{\sharp}}\left(\Omega\right)
/\sim_{\textbf{T}^{\sharp}}$, where
\begin{eqnarray}
\begin{array}{ll}
\forall & \textbf{u}^{\sharp},\textbf{v}^{\sharp}\in\mathcal{NL}^{m}\left(\Omega\right)^{K}\mbox{ :} \\
& \textbf{u}^{\sharp}\sim_{\textbf{T}^{\sharp}}\textbf{v}^{\sharp} \Leftrightarrow \textbf{T}^{\sharp}\textbf{u}^{\sharp}=\textbf{T}^{\sharp}\textbf{v}^{\sharp} \\
\end{array}\nonumber
\end{eqnarray}
Therefore, the generalized solution
$\textbf{U}^{\sharp}\in\mathcal{NL}_{\textbf{T}}\left(\Omega\right)$
constructed in Theorem \ref{SolEx} may be represented as
\begin{eqnarray}
\textbf{U}^{\sharp}= \left\{\textbf{u}^{\sharp}\in\mathcal{NL}^{m}
\left(\Omega\right)^{K}\mbox{ :
}\textbf{T}^{\sharp}\textbf{u}^{\sharp}=\textbf{f}
\right\}\label{GSRep}
\end{eqnarray}

Regarding the structure of the space
$\mathcal{NL}^{m}\left(\Omega\right)$ and its elements, we may
recall that the uniform convergence structure $\mathcal{J}_{D}$ on
$\mathcal{ML}^{m}\left(\Omega\right)$ is the initial uniform
convergence structure with respect to the family of mappings
\begin{eqnarray}
\left(\mathcal{D}^{\alpha}: \mathcal{ML}^{m}\left(\Omega\right)
\rightarrow
\mathcal{ML}^{0}\left(\Omega\right)\right)_{|\alpha|\leq
m}\nonumber
\end{eqnarray}
As such, the mapping
\begin{eqnarray}
\textbf{D}: \mathcal{ML}^{m}\left(\Omega\right) \ni u\mapsto
\left(\mathcal{D}^{\alpha}u\right)_{|\alpha|\leq
m}\in\mathcal{ML}^{0}\left(\Omega\right)^{M}\nonumber
\end{eqnarray}
is a uniformly continuous embedding.  In particular, for each
$|\alpha|\leq m$, the diagram\\
\\
\\
\begin{math}
\setlength{\unitlength}{1cm} \thicklines
%\begin{pspicture}(13,6)
\begin{picture}(13,6)

\put(3.4,5.2){$\mathcal{ML}^{m}\left(\Omega\right)$}
\put(5.1,5.3){\vector(1,0){6.4}}
\put(11.6,5.2){$\mathcal{ML}^{0}\left(\Omega\right)^{M}$}
\put(7.2,0.9){$\mathcal{ML}^{0}\left(\Omega\right)$}
\put(7.8,5.7){$\textbf{D}$} \put(4.0,5.0){\vector(1,-1){3.5}}
\put(4.9,3.2){$\mathcal{D}^{\alpha}$}
\put(10.7,3.2){$\pi_{\alpha}$} \put(12.1,5.0){\vector(-1,-1){3.5}}

\end{picture}
%\end{pspicture}
\end{math}\\
\\
commutes, with $\pi_{\alpha}$ the projection.  This diagram
amounts to a decomposition of
$u\in\mathcal{ML}^{m}\left(\Omega\right)$ into its differential
components.  In view of the uniform continuity of the mapping
$\textbf{D}$ and its inverse, $\textbf{D}$ extends to an embedding
\begin{eqnarray}
\textbf{D}^{\sharp}:\mathcal{ML}^{m}\left(\Omega\right)^{\sharp}
\rightarrow \mathcal{NL}\left(\Omega\right)^{M}\nonumber
\end{eqnarray}
Moreover, since each mapping $\mathcal{D}^{\alpha}$ is uniformly
continuous, one obtains the commutative diagram\\
\\
\\
\begin{math}
\setlength{\unitlength}{1cm} \thicklines
%\begin{pspicture}(13,6)
\begin{picture}(13,6)

\put(3.4,5.2){$\mathcal{NL}^{m}\left(\Omega\right)$}
\put(5.1,5.3){\vector(1,0){6.4}}
\put(11.6,5.2){$\mathcal{NL}\left(\Omega\right)^{M}$}
\put(7.3,1.0){$\mathcal{NL}\left(\Omega\right)$}
\put(7.8,5.7){$\textbf{D}^{\sharp}$}
\put(4.0,5.0){\vector(1,-1){3.5}}
\put(4.9,3.2){$\mathcal{D}^{\alpha\sharp}$}
\put(10.7,3.2){$\pi_{\alpha}^{\sharp}$}
\put(12.1,5.0){\vector(-1,-1){3.5}}

\end{picture}
%\end{pspicture}
\end{math}\\
\\
where $\mathcal{D}^{\alpha\sharp}$ is the extension through
uniform continuity of the partial differential operator
$\mathcal{D}^{\alpha}$. Since the mapping $\textbf{D}^{\sharp}$ is
an embedding, and in view of the commutative diagram above, each
generalized function
$\textbf{u}^{\sharp}\in\mathcal{NL}^{m}\left(\Omega\right)$ may be
uniquely represented by its differential components
\begin{eqnarray}
\textbf{u}^{\sharp}\mapsto \textbf{D}^{\sharp}\textbf{u}^{\sharp}=
\left(\mathcal{D}^{\alpha\sharp}\textbf{u}^{\sharp}\right)_{|\alpha|\leq
m}\nonumber
\end{eqnarray}
Moreover, each differential component $\mathcal{D}^{\alpha\sharp}
\textbf{u}^{\sharp}$ of $\textbf{u}^{\sharp}$ is a nearly finite
normal lower semi-continuous function.  We note, therefore, that
the set of singular points of each
$\textbf{u}^{\sharp}\in\mathcal{NL}^{m}\left(\Omega\right)$, that
is, the set
\begin{eqnarray}
\left\{x\in\Omega\begin{array}{|ll}
\exists & |\alpha|\leq m\mbox{ :} \\
& \mathcal{D}^{\alpha\sharp}\textbf{u}^{\sharp}\mbox{ not continuous at }x \\
\end{array} \right\}\nonumber
\end{eqnarray}
is at most a set of First Baire Category.  That is, it is a
topologically small set.  However, this set may be dense in
$\Omega$.  Furthermore, such a set may have arbitrarily large
positive Lebesgue measure \cite{Oxtoby}.

\section{Conclusion}

We have established the existence of generalized solutions to a
large class of systems of nonlinear PDEs.  The space of
generalized functions that contains the solutions is constructed
as the uniform convergence space completion of a space of
piecewise smooth functions, and is independent of the particular
partial differential equation under consideration. Moreover, the
generalized functions may be represented through their
differential components as normal lower semi-continuous functions.
To what extent the generalized solutions may be interpreted
classically, that is, as continuously differentiable functions
that satisfy the equations classically, is still an open problem.

In terms of the previous existence and uniqueness results obtained
through the Order Completion Method, notably Theorem \ref{SolEx},
we may interpret Theorem \ref{SolExIII} as a regularity result.
In particular, it appear that the generalized solution delivered
by Theorem \ref{SolEx} is nothing but the \textit{totality} of all
solutions in $\mathcal{NL}^{m}\left(\Omega\right)^{K}$.

\appendix

This appendix contains the proof of Theorem \ref{TUnifCon}.  It is
based on the following results which may be found in
\cite{AngvdWalt}, \cite{vdWalt3} and \cite{vdWalt6}.
\begin{proposition}\label{MLDense}
For any $u\in\mathcal{NL}\left(\Omega\right)$ there exists
sequences $\left(\lambda_{n}\right)$ and $\left(\mu_{n}\right)$ in
$\mathcal{ML}^{0}\left(\Omega\right)$ so that
\begin{eqnarray}
\begin{array}{ll}
\forall & n\in\mathbb{N}\mbox{ :} \\
& \lambda_{n}\leq \lambda_{n+1}\leq u \leq \mu_{n+1}\leq \mu_{n} \\
\end{array}
\end{eqnarray}
and
\begin{eqnarray}
\begin{array}{ll}
\forall & x\in\Omega\mbox{ :} \\
& \sup\{\lambda_{n}\left(x\right)\mbox{ : }n\in\mathbb{N}\}=u\left(x\right)=\inf\{\mu_{n}\left(x\right)\mbox{ : }n\in\mathbb{N}\} \\
\end{array}\nonumber
\end{eqnarray}
\end{proposition}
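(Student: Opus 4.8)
The plan is to build the increasing minorants $(\lambda_n)$ and the decreasing majorants $(\mu_n)$ by two separate constructions, the guiding observation being that a nearly finite normal lower semi-continuous function is in particular lower semi-continuous: since $u=\left(I\circ S\right)(u)$ and the operator $I$ always returns a lower semi-continuous function, $u$ itself is lower semi-continuous. Moreover, because $u$ is nearly finite, the sets $\{x:u(x)=-\infty\}$ and $\{x:u(x)=+\infty\}$ are closed and nowhere dense, being contained in the complement of the open dense set on which $u$ is finite.

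For the minorants I would exploit the lower semi-continuity directly. Using only the points where $u$ is finite, the inf-convolutions $\phi_n(x)=\inf\{u(y)+n\|x-y\|:y\in\Omega,\ u(y)>-\infty\}$ are Lipschitz, hence continuous, satisfy $\phi_n\le\phi_{n+1}\le u$, and increase pointwise to $u$ at every point where $u$ is finite or $+\infty$, by lower semi-continuity. Assigning the value $-\infty$ on the nowhere dense set $\{u=-\infty\}$ gives functions that are continuous off a closed nowhere dense set, hence lie in $\mathcal{ML}^0\left(\Omega\right)$; this assignment is in fact forced, since an increasing sequence can reach $-\infty$ only by being constantly $-\infty$ there. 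A routine diagonalisation then produces $(\lambda_n)\subset\mathcal{ML}^0\left(\Omega\right)$ with $\lambda_n\nearrow u$ pointwise.

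The majorants $(\mu_n)$ are the crux, and here lower semi-continuity alone is not enough. The pointwise infimum of any family of continuous functions lying above $u$ recovers the upper Baire envelope $S(u)$, which strictly exceeds $u$ precisely on the nowhere dense singular set $\{u<S(u)\}$; and one cannot repair this by a downward spike at an isolated singular point $x_0$, since normality $\mu_n=\left(I\circ S\right)(\mu_n)$ forces $\mu_n(x_0)=\liminf_{y\to x_0}S(\mu_n)(y)$, so a function majorising $u$ near $x_0$ cannot dip below the surrounding values at $x_0$ alone. The correct device is to use the full normality identity $u=I\left(S(u)\right)$ together with the discontinuities across nowhere dense sets that $\mathcal{ML}^0\left(\Omega\right)$ permits: writing the first category discontinuity set of $u$ as an increasing union $\bigcup_k\Gamma_k$ of closed nowhere dense sets and using that $u$ coincides with $S(u)$ at each of its (residually many) continuity points, one lets $\mu_n$ agree with a continuous majorant off $\Gamma_n$ while being pulled down toward $u$ on $\Gamma_n$; the inner infimum encoded in $I$ is what drives the pointwise infimum down from $S(u)$ to $I\left(S(u)\right)=u$.

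The main obstacle is exactly this pointwise control of $(\mu_n)$ on the singular set: one must simultaneously keep the sequence decreasing, above $u$, continuous off a closed nowhere dense set, and yet collapsing to $u$ rather than to $S(u)$ at every point. If the explicit bookkeeping becomes unwieldy I would fall back on the abstract route already available in the excerpt: the space $\mathcal{NL}\left(\Omega\right)$ carrying $\mathcal{J}_o^{\sharp}$ is the completion of $\mathcal{ML}^0\left(\Omega\right)$, and $\mathcal{J}_o^{\sharp}$ induces the order convergence structure (\ref{OCStructure}), so density of $\mathcal{ML}^0\left(\Omega\right)$ furnishes a filter on $\mathcal{ML}^0\left(\Omega\right)$ converging to $u$, whereupon the characterisation (\ref{OCStructure}) of that convergence returns sequences $(\lambda_n),(\mu_n)$ fulfilling exactly the two displayed conditions of the proposition. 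In either approach the delicate point is upgrading the lattice-theoretic suprema and infima to the genuinely pointwise equalities $\sup\{\lambda_n(x):n\}=u(x)=\inf\{\mu_n(x):n\}$ asserted in the statement.
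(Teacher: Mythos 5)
First, be aware that the paper itself offers no proof of Proposition \ref{MLDense}: it is listed in the appendix as a known result imported from \cite{AngvdWalt}, \cite{vdWalt3} and \cite{vdWalt6}, so your argument can only be judged on its own merits. On the minorant side there is a concrete error: the inf-convolution $\phi_{n}\left(x\right)=\inf\{u\left(y\right)+n\|x-y\|\mbox{ : }u\left(y\right)>-\infty\}$ is identically $-\infty$ whenever $u$ is unbounded below, which a nearly finite normal lower semi-continuous function may well be (take $u=-1/d\left(\cdot,\Gamma\right)$ off a closed nowhere dense set $\Gamma$ and $u=-\infty$ on $\Gamma$); points $y$ with $u\left(y\right)$ finite but arbitrarily negative destroy the infimum at every $x$, so the claimed properties $\phi_{n}\leq\phi_{n+1}\leq u$ with $\phi_{n}$ increasing pointwise to $u$ fail. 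This is repairable (truncate $u$ below at $-n$ before convolving, take running maxima, renormalize with $I\circ S$ across $\{u=-\infty\}$), but it must be written out.

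The majorants, which you rightly call the crux, are where the proposal stops being a proof. Your intended device --- let $\mu_{n}$ agree with a continuous majorant off a closed nowhere dense $\Gamma_{n}$ while being ``pulled down toward $u$ on $\Gamma_{n}$'' --- is not merely unfinished but, read literally, impossible, for exactly the reason you yourself give for isolated spikes: if $\mu_{n}$ is normal and continuous off $\Gamma_{n}$, its values on $\Gamma_{n}$ are forced to equal the $\liminf$ of its values off $\Gamma_{n}$ and cannot be lowered independently. To achieve $\inf_{n}\mu_{n}\left(x_{0}\right)=u\left(x_{0}\right)$ at a genuine discontinuity point one must lower $\mu_{n}$ on \emph{open} pieces adjacent to $x_{0}$ (for instance on cells of a shrinking grid, using local suprema of $u$ on each cell), so that the normalization $I\circ S$ computes the correct $\liminf$ there; this step-function construction is the actual content of the cited sources and is absent from your sketch. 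The fallback via the completion does not rescue it: the identification of $\left(\mathcal{NL}\left(\Omega\right),\mathcal{J}_{o}^{\sharp}\right)$ as the completion of $\mathcal{ML}^{0}\left(\Omega\right)$ is itself established using this density result, so the appeal is circular, and in any case (\ref{OCStructure}) only yields lattice suprema and infima in $\mathcal{NL}\left(\Omega\right)$, which differ from the pointwise ones asserted in the proposition by an application of $I\circ S$, i.e.\ possibly on an entire nowhere dense set. You flag this last discrepancy yourself as ``the delicate point''; it is in fact the whole content of the statement, and it remains unproved.
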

\begin{proposition}\label{NLBounded}
Consider a set $U\subset \mathcal{NL}\left(\Omega\right)$ so that
\begin{eqnarray}
\begin{array}{ll}
\exists & B\subseteq \Omega\mbox{ of First Baire Category :} \\
\exists & v:\Omega\setminus B\rightarrow \mathbb{R}\mbox{ :} \\
& x\in\Omega\setminus B\Rightarrow u\left(x\right)\leq v\left(x\right)\mbox{, }u\in U \\
\end{array}\nonumber
\end{eqnarray}
Then there is some $w\in\mathcal{NL}\left(\Omega\right)$ so that
\begin{eqnarray}
\begin{array}{ll}
\forall & u\in U\mbox{ :} \\
& u\leq w \\
\end{array}\nonumber
\end{eqnarray}
The corresponding statement for sets bounded from below is also
true.
\end{proposition}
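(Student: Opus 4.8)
The plan is to exhibit $w$ explicitly as the normal lower semi-continuous regularization of the pointwise supremum of $U$. Assuming $U$ nonempty (the empty case being trivial), set $s\left(x\right)=\sup\{u\left(x\right)\mbox{ : }u\in U\}$ for $x\in\Omega$, an extended real valued function, and put $w=\left(I\circ S\right)\left(s\right)$. Since each $u\in U$ is in particular lower semi-continuous, and a pointwise supremum of lower semi-continuous functions is again lower semi-continuous, $s$ is lower semi-continuous. Because $\left(I\circ S\right)$ is idempotent, $\left(I\circ S\right)\left(w\right)=w$, so $w$ satisfies (\ref{NLSDef}) and is normal lower semi-continuous. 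That $w$ dominates $U$ is then immediate from monotonicity: for $u\in U$ one has $u\leq s$ pointwise, whence $u=\left(I\circ S\right)\left(u\right)\leq\left(I\circ S\right)\left(s\right)=w$, the first equality holding because $u$ is itself normal lower semi-continuous. It therefore remains only to verify that $w$ is nearly finite, that is, real valued on an open dense subset of $\Omega$.

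The crux is to pass from the hypothesis, which gives $s\leq v<\infty$ on the residual set $\Omega\setminus B$ and hence $\{s=+\infty\}\subseteq B$ of first category, to genuine local boundedness of $s$ on a dense open set. First I would introduce $L=\{x\in\Omega\mbox{ : }\exists\,\delta>0,\ \sup\{s\left(y\right)\mbox{ : }\|x-y\|<\delta\}<\infty\}$, which is open by construction, and argue that $L$ is dense. Suppose, for contradiction, that some open ball $B_{0}\subseteq\Omega$ is disjoint from $L$. Then $s$ fails to be bounded above on every ball meeting $B_{0}$, so each of the super-level sets $E_{N}=\{s>N\}$, which are open precisely because $s$ is lower semi-continuous, is dense in $B_{0}$. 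The Baire Category Theorem applied in $B_{0}$ then forces $\bigcap_{N}E_{N}=\{s=+\infty\}$ to be residual, hence nonmeager, in $B_{0}$; but $\{s=+\infty\}\subseteq B$ is of first category, a contradiction. Thus $L$ is dense and open.

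With $L$ in hand the conclusion follows quickly. On $L$ we have $S\left(s\right)<\infty$, and since $I\left(g\right)\leq g$ pointwise for every $g$, also $w=I\left(S\left(s\right)\right)\leq S\left(s\right)<\infty$ on $L$. Fixing any $u_{0}\in U$, monotonicity gives $w\geq u_{0}$, so $w>-\infty$ on the open dense set $\{u_{0}\in\mathbb{R}\}$ furnished by the near finiteness of $u_{0}$. Consequently $w$ is real valued on the open dense set $L\cap\{u_{0}\in\mathbb{R}\}$, which together with the normal lower semi-continuity established above shows $w\in\mathcal{NL}\left(\Omega\right)$, as required. The statement for sets bounded from below is obtained by applying the result to $\{-u\mbox{ : }u\in U\}$, using that $u\mapsto -u$ interchanges $I$ and $S$ and preserves $\mathcal{NL}\left(\Omega\right)$.

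The only genuinely delicate point, and the step I expect to require the most care, is the density of $L$: it is exactly here that the hypothesis must be exploited through the lower semi-continuity of $s$ (which makes the super-level sets $E_{N}$ open) together with the Baire property of $\Omega$, converting the merely pointwise, off-a-meager-set bound $v$ into a local bound. Everything else reduces to the monotonicity and idempotency of the Baire operators recorded in Section 2.
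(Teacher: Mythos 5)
Your core strategy --- take $w=\left(I\circ S\right)\left(s\right)$ for $s$ the pointwise supremum, and use the Baire Category Theorem together with the lower semi-continuity of $s$ to extract a dense open set $L$ of local boundedness --- is the natural one, and your density argument for $L$ is correct as written. (Note that the paper itself gives no proof of this proposition; it is imported from \cite{AngvdWalt}, \cite{vdWalt3} and \cite{vdWalt6}, so there is no in-text argument to compare against.) The gap is in your final step. Near finiteness, as defined in Section 2, requires the set $\{x\in\Omega\mbox{ : }w\left(x\right)\in\mathbb{R}\}$ itself to be \emph{open} and dense; you have shown only that it \emph{contains} the open dense set $L\cap\{u_{0}\in\mathbb{R}\}$, which is strictly weaker, and your particular $w$ can genuinely fail the stronger property. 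Take $\Omega=\mathbb{R}$ and let $u_{n}$ be a spike at $1/n$: equal to $+\infty$ at $1/n$, continuous elsewhere, vanishing outside a small interval around $1/n$, the intervals being pairwise disjoint and contained in $\left(0,2\right)$. Each $u_{n}\in\mathcal{NL}\left(\mathbb{R}\right)$, and the hypothesis holds with $B=\{0\}\cup\{1/n\mbox{ : }n\in\mathbb{N}\}$, which is countable hence of first category. But then $w\left(1/n\right)=+\infty$ for every $n$ by monotonicity, while $w\left(0\right)=0$, because every ball about the origin contains points $y<0$ at which $S\left(s\right)\left(y\right)=0$, so the infima defining $I$ all vanish there. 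Thus $0$ lies in $\{w\in\mathbb{R}\}$ but is not an interior point of it, and $w\notin\mathcal{NL}\left(\mathbb{R}\right)$ as the paper defines that space.

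The repair uses the freedom the statement allows: $w$ need only be \emph{an} upper bound, not the least one. Having produced $L$, set $\Gamma=\Omega\setminus L$, a closed nowhere dense set, and regularize instead $h=\max\left(S\left(s\right),1/\mathrm{dist}\left(\cdot,\Gamma\right)\right)$, extended by $+\infty$ on $\Gamma$. Then $w'=\left(I\circ S\right)\left(h\right)$ still dominates every $u\in U$ since $h\geq S\left(s\right)\geq s\geq u$ and $\left(I\circ S\right)$ is monotone with $\left(I\circ S\right)\left(u\right)=u$; it is finite on $L$ because $h$ is locally bounded above there; and it is identically $+\infty$ on $\Gamma$ because $h\left(y\right)\geq 1/\mathrm{dist}\left(y,\Gamma\right)$ forces the infima in $I$ over balls about points of $\Gamma$ to diverge. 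Hence $\{w'\in\mathbb{R}\}=L\cap\{w'>-\infty\}$ is an intersection of two open sets containing the dense set $L\cap\{u_{0}\in\mathbb{R}\}$, and $w'\in\mathcal{NL}\left(\Omega\right)$. Separately, your one-line reduction of the lower-bound case via $u\mapsto -u$ does not work: $\mathcal{NL}\left(\Omega\right)$ is not closed under pointwise negation. For instance $\chi_{\left(0,1\right)}$, extended by $0$, satisfies $\left(I\circ S\right)\left(\chi_{\left(0,1\right)}\right)=\chi_{\left(0,1\right)}$, but $\left(I\circ S\right)\left(-\chi_{\left(0,1\right)}\right)$ takes the value $-1$ at the endpoints, so $-\chi_{\left(0,1\right)}$ is not normal lower semi-continuous. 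The dual statement requires its own mirror argument, with $I$ and $S$, suprema and infima, interchanged throughout.
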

\begin{proposition}
Let $L$ be a lattice with respect to a given partial order $\leq$.
\begin{enumerate}
    \item For every $n\in\mathbb{N}$, let the sequence
    $\left(u_{m,n}\right)$ in $L$ be bounded and increasing and
    let
    \begin{eqnarray}
    \begin{array}{l}
u_{n}=\sup\{u_{m,n}\mbox{ : }m\in\mathbb{N}\}\mbox{,
}n\in\mathbb{N} \\
u_{n}'=\sup\{u_{m,n}\mbox{ : }m=1,...,n\} \\
\end{array}\nonumber
    \end{eqnarray}
    If the sequence $\left(u_{n}\right)$ is bounded from above and
    increasing, and has supremum in $L$, then the sequence
    $\left(u'_{n}\right)$ is bounded and increasing and
    \begin{eqnarray}
\sup\{u_{n}\mbox{ : }n\in\mathbb{N}\}= \sup\{u'_{n}\mbox{ :
}n\in\mathbb{N}\}\nonumber
    \end{eqnarray}
    \item For every $n\in\mathbb{N}$, let the sequence
    $\left(v_{m,n}\right)$ in $L$ be bounded and decreasing and
    let
    \begin{eqnarray}
    \begin{array}{l}
v_{n}=\inf\{v_{m,n}\mbox{ : }m\in\mathbb{N}\}\mbox{,
}n\in\mathbb{N} \\
v_{n}'=\inf\{v_{m,n}\mbox{ : }m=1,...,n\} \\
\end{array}\nonumber
    \end{eqnarray}
    If the sequence $\left(v_{n}\right)$ is bounded from below and
    decreasing, and has infimum in $L$, then the sequence
    $\left(v'_{n}\right)$ is bounded and decreasing and
    \begin{eqnarray}
\inf\{v_{n}\mbox{ : }n\in\mathbb{N}\}= \inf\{v'_{n}\mbox{ :
}n\in\mathbb{N}\}\nonumber
    \end{eqnarray}
\end{enumerate}
\end{proposition}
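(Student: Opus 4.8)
The plan is to establish part 1 in full and then deduce part 2 by appealing to the order-duality of $L$: the reversed relation $\leq^{\mathrm{op}}$ again makes $L$ a lattice, decreasing sequences for $\leq$ are increasing for $\leq^{\mathrm{op}}$, and infima for $\leq$ are suprema for $\leq^{\mathrm{op}}$, so part 2 is exactly part 1 read in $(L,\leq^{\mathrm{op}})$. Throughout I write $u=\sup\{u_{n}:n\in\mathbb{N}\}$ for the supremum supplied by the hypothesis, and I use that the array $(u_{m,n})$ is monotone in each index. Since $(u_{m,n})_{m}$ increases in $m$, the finite supremum defining $u_{n}'$ is attained at its largest index, giving $u_{n}'=u_{n,n}\leq u_{n}\leq u$; in particular $(u_{n}')$ is bounded above by $u$.

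For the monotonicity of $(u_{n}')$ I would chain the two directions of monotonicity of the array: $u_{n}'=u_{n,n}\leq u_{n,n+1}\leq u_{n+1,n+1}=u_{n+1}'$, where the first inequality uses that $(u_{m,n})_{n}$ increases in its second index and the second that $(u_{m,n})_{m}$ increases in its first. Thus $(u_{n}')$ is bounded and increasing.

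It remains to prove $\sup\{u_{n}':n\in\mathbb{N}\}=u$, which also shows that this supremum exists in $L$. The inequality $\sup_{n}u_{n}'\leq u$ is immediate from $u_{n}'\leq u$. For the converse I would show that any upper bound $w$ of $\{u_{n}':n\in\mathbb{N}\}$ satisfies $w\geq u$. Fix indices $m,n$ and put $N=\max\{m,n\}$; by joint monotonicity $u_{m,n}\leq u_{N,N}=u_{N}'\leq w$, so $w$ is an upper bound of the set $\{u_{m,n}:m\in\mathbb{N}\}$ for every fixed $n$. Since $u_{n}$ is by definition the least such upper bound, $w\geq u_{n}$ for all $n$, whence $w\geq\sup_{n}u_{n}=u$. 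Therefore $u$ is the least upper bound of $\{u_{n}'\}$, establishing both the existence of $\sup_{n}u_{n}'$ and the equality $\sup_{n}u_{n}'=\sup_{n}u_{n}$.

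The step I expect to be the main obstacle is the identification of the diagonal supremum with the full supremum, namely the domination $u_{m,n}\leq u_{N,N}$ for $N\geq\max\{m,n\}$ and the ensuing passage to least upper bounds. This is precisely the point at which the monotonicity of the array in \emph{both} indices is indispensable: without control of $(u_{m,n})$ in the second index the diagonal $u_{n,n}$ need be neither increasing nor cofinal in $\{u_{m,n}:m,n\in\mathbb{N}\}$, so the two suprema could differ. It is also the step where the hypothesis that $(u_{n})$ admits a supremum in the possibly order-incomplete lattice $L$ is used, guaranteeing that the suprema appearing in the argument are genuinely available in $L$.
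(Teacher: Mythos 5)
First, a point of comparison: the paper does not actually prove this proposition. It appears in the appendix as one of the results imported from the cited references, so there is no in-paper argument to measure yours against. Taken on its own terms, your diagonal argument is the standard proof of the result the author evidently intends. The step you single out as the crux is handled correctly: any upper bound $w$ of $\{u_{n}':n\in\mathbb{N}\}$ dominates every $u_{m,n}$ via $u_{m,n}\leq u_{N,N}=u_{N}'$ with $N=\max\{m,n\}$, hence dominates each $u_{n}$ by the least-upper-bound property of $u_{n}$, hence dominates $u=\sup_{n}u_{n}$; together with $u_{n}'\leq u$ this identifies $u$ as $\sup_{n}u_{n}'$ and, importantly, establishes that this supremum \emph{exists} in the possibly non-complete lattice $L$. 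The reduction of part 2 to part 1 by order duality is unobjectionable.

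The one genuine issue is a mismatch between your hypotheses and those printed in the statement. Both of the steps $u_{n,n}\leq u_{n,n+1}$ and $u_{m,n}\leq u_{m,N}$ require $u_{m,n}$ to be increasing in the \emph{second} index for fixed $m$, and you say explicitly that you assume the array is monotone in each index. But the statement only assumes that, for each fixed $n$, the sequence $\left(u_{m,n}\right)_{m}$ is increasing in $m$, plus monotonicity of the derived sequence $\left(u_{n}\right)$; monotonicity of the array in $n$ is not among the hypotheses and does not follow from them. Under the printed hypotheses the conclusion is in fact false: in $L=\mathbb{R}$ take $u_{m,n}=-1$ for $m\leq n$ and $u_{m,n}=0$ for $m>n$. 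Each column is bounded and increasing in $m$, $u_{n}=0$ for all $n$, so $\left(u_{n}\right)$ is increasing with supremum $0$, yet $u_{n}'=u_{n,n}=-1$ for all $n$, so $\sup_{n}u_{n}'=-1\neq 0$; a similar example defeats the claim that $\left(u_{n}'\right)$ is increasing. To your credit, your closing paragraph identifies precisely this dependence. So the verdict is: your argument is the correct proof of a correctly stated version of the proposition (with joint monotonicity added to the hypotheses), and the defect it exposes lies in the statement as printed rather than in your reasoning; but as a proof of the proposition \emph{as stated}, the inequalities $u_{n,n}\leq u_{n,n+1}$ and $u_{m,n}\leq u_{m,N}$ are unjustified, and no proof can close that gap because the claim is false without the extra hypothesis.
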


\textbf{Proof of Theorem 7. }The mapping $\textbf{T}$ may be
represented through the diagram\\
\\
\\
\begin{math}
\setlength{\unitlength}{1cm} \thicklines
%\begin{pspicture}(13,6)
\begin{picture}(13,6)

\put(3.4,5.2){$\mathcal{ML}^{m}\left(\Omega\right)^{K}$}
\put(5.1,5.3){\vector(1,0){6.4}}
\put(11.6,5.2){$\mathcal{ML}^{0}\left(\Omega\right)^{K}$}
\put(7.2,0.9){$\mathcal{ML}^{0}\left(\Omega\right)^{M}$}
\put(7.8,5.7){$\textbf{T}$} \put(4.0,5.0){\vector(1,-1){3.5}}
\put(5.0,3.2){$\textbf{D}$}
\put(10.7,3.2){$\overline{\textbf{F}}$}
\put(8.5,1.5){\vector(1,1){3.5}}

\end{picture}
%\end{pspicture}
\end{math}\\
\\
where $\textbf{D}$ maps $\textbf{u}$ to its vector of derivatives,
that is,
\begin{eqnarray}
\textbf{D}:\mathcal{ML}^{m}\left(\Omega\right)^{K}\ni
\textbf{u}\mapsto
\left(\mathcal{D}^{\alpha}u_{i}\right)_{|\alpha|\leq m,i\leq K}
\in\mathcal{ML}^{0}\left(\Omega\right)^{M}\label{DDef}
\end{eqnarray}
and $\textbf{F}=\left(\overline{F}_{i}\right)_{i\leq K}$ is
defined componentwise through
\begin{eqnarray}
\overline{F}_{i}:\mathcal{ML}^{0}\left(\Omega\right)^{M}\ni\textbf{u}
\mapsto \left(I\circ
S\right)\left(F_{i}\left(\cdot,u_{1},...,u_{M}\right)\right)
\in\mathcal{ML}^{0}\left(\Omega\right) \label{FDef}
\end{eqnarray}
Clearly the mapping $\textbf{D}$ is uniformly continuous, so in
view of the diagram it suffices to show that
$\overline{\textbf{F}}$ is uniformly continuous with respect to
the product uniform
convergence structure.\\
In this regard, we consider sequences of order intervals
$\left(I_{n}^{i}\right)$ in $\mathcal{ML}^{0}\left(\Omega\right)$,
where $i=1,...,M$, that satisfies condition 2) of Definition
\ref{JoDef}.  We claim
\begin{eqnarray}
\begin{array}{ll}
\forall & n\in\mathbb{N}\mbox{ :} \\
\exists & J_{n}^{1},...,J_{n}^{K}\subseteq \mathcal{ML}^{0}\left(\Omega\right)\mbox{ order intervals :} \\
& F_{j}\left(\prod_{i=1}^{M}I_{n}^{i}\right)\subseteq J_{n}^{j}\mbox{, }j=1,...,K \\
\end{array}\label{FClaim}
\end{eqnarray}
To verify (\ref{FClaim}), observe that there is a closed nowhere
dense set $\Gamma_{n}\subseteq \Omega$ so that
\begin{eqnarray}
\begin{array}{ll}
\forall & x\in\Omega\setminus\Gamma\mbox{ :} \\
\exists & a\left(x\right)>0\mbox{ :} \\
\forall & i=1,...,M\mbox{ :} \\
& u\in I_{n}^{i}\Rightarrow |u\left(x\right)|\leq a\left(x\right) \\
\end{array}\label{PB1}
\end{eqnarray}
Since $F_{j}:\Omega\times\mathbb{R}^{M}\rightarrow \mathbb{R}$ is
continuous, it follows from (\ref{PB1}) that
\begin{eqnarray}
\begin{array}{ll}
\forall & x\in\Omega\setminus\Gamma\mbox{ :} \\
\exists & b\left(x\right)>0\mbox{ :} \\
& \left(\begin{array}{ll}
\forall & i=1,...,M\mbox{ :} \\
& u_{i}\in I_{n}^{i} \\
\end{array}\right)\Rightarrow |F_{j}\left(x,u_{1}\left(x\right),...,u_{M}\left(x\right)\right)|\leq b\left(x\right) \\
\end{array}\label{PB2}
\end{eqnarray}
Therefore, in view of Proposition \ref{NLBounded}, our claim
(\ref{FClaim}) holds.  In particular, since
$\mathcal{NL}\left(\Omega\right)$ is Dedekind complete
\cite{vdWalt3}, we may set
\begin{eqnarray}
J_{n}^{j}=[\lambda_{n}^{j},\mu_{n}^{j}]\nonumber
\end{eqnarray}
where, for each $n\in\mathbb{N}$ and each $j=1,...,K$
\begin{eqnarray}
\lambda_{n}^{j}=\inf\{\overline{F}_{j}\textbf{u}\mbox{ :
}\textbf{u}\in \prod_{i=1}^{M}I_{n}^{i}\}\nonumber
\end{eqnarray}
and
\begin{eqnarray}
\mu_{n}^{j}=\sup\{\overline{F}_{j}\textbf{u}\mbox{ :
}\textbf{u}\in \prod_{i=1}^{M}I_{n}^{i}\}\nonumber
\end{eqnarray}
The sequence $\left(\lambda_{n}^{j}\right)$ and
$\left(\mu_{n}^{j}\right)$ are increasing and decreasing,
respectively.  For each $j=1,...,K$ we may consider
\begin{eqnarray}
\sup\{\lambda_{n}^{j}\mbox{ : }n\in\mathbb{N}\}=u^{j} \leq
v^{j}=\inf\{\mu_{n}^{j}\mbox{ : }n\in\mathbb{N}\}\nonumber
\end{eqnarray}
We claim that $u^{j}=v^{j}$.  To see this, we note that for each
$j=1,...,K$ there is some
$w^{j}\in\mathcal{NL}\left(\Omega\right)$ so that
\begin{eqnarray}
\sup\{l_{n}^{j}\mbox{ : }n\in\mathbb{N}\}
=w^{j}=\inf\{u^{j}_{n}\mbox{ : }n\in\mathbb{N}\}\nonumber
\end{eqnarray}
where $I_{n}^{j}=[l_{n}^{j},u^{j}_{n}]$.  Applying Lemma
\ref{IncPW} and the continuity of $F_{j}$ our claim is verified.
Applying Propositions \ref{MLDense} and \ref{NLBounded} we obtain
sequence $\left(\overline{I}_{n}^{j}\right)$ of order intervals in
$\mathcal{ML}^{0}\left(\Omega\right)$ that satisfies condition 2)
of Definition \ref{JoDef} and
\begin{eqnarray}
\overline{F}_{j}\left(\prod_{i=1}^{M}I_{n}^{i}\right) \subseteq
\overline{I}_{n}^{j}\nonumber
\end{eqnarray}
This completes the proof.\mbox{ } $\blacksquare$\\ \\
The proof also relies on the following lemma.
\begin{lemma}\label{IncPW}
Consider a decreasing sequence $\left(u\right)$ in
$\mathcal{NL}\left(\Omega\right)$ that satisfies
\begin{eqnarray}
u=\inf\{u_{n}\mbox{ : }n\in\mathbb{N}\}\in\mathcal{NL}
\left(\Omega\right)\nonumber
\end{eqnarray}
Then the follows holds:
\begin{eqnarray}
\begin{array}{ll}
\forall & \epsilon>0\mbox{ :} \\
\exists & \Gamma_{\epsilon}\subseteq\Omega\mbox{ closed nowhere dense :} \\
& x\in\Omega\setminus\Gamma_{\epsilon}\Rightarrow \left(\begin{array}{ll}
\exists & N_{\epsilon}\in\mathbb{N}\mbox{ :} \\
& u_{n}\left(x\right)-u\left(x\right)<\epsilon\mbox{, }n\geq N_{\epsilon} \\
\end{array}\right) \\
\end{array}\nonumber
\end{eqnarray}
The corresponding statement for increasing sequences is also true.
\end{lemma}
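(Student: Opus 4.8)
The plan is to reduce the statement to a nowhere-density claim about a single ``bad set'' and then to establish that claim by contradiction, exploiting that $u$ is the \emph{greatest} lower bound of the $u_n$ in $\mathcal{NL}(\Omega)$. First I would reformulate the conclusion pointwise. Since $(u_n)$ is decreasing, the number $u_n(x)-u(x)$ is nonincreasing in $n$, so the inner statement ``$\exists N_\epsilon$ with $u_n(x)-u(x)<\epsilon$ for $n\ge N_\epsilon$'' holds at $x$ exactly when $\widetilde u(x)<u(x)+\epsilon$, where $\widetilde u(x)=\inf\{u_n(x):n\in\mathbb{N}\}$ is the \emph{pointwise} infimum. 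Because the order on $\mathcal{NL}(\Omega)$ is pointwise and $u$ is a lower bound of the $u_n$, we have $u\le\widetilde u$ pointwise. Thus it suffices to prove that the bad set
\[
G_\epsilon=\{x\in\Omega:\widetilde u(x)\ge u(x)+\epsilon\}
\]
is nowhere dense; one then takes $\Gamma_\epsilon=\overline{G_\epsilon}$. Note that every point at which $u$ is infinite automatically lies in $G_\epsilon$, so outside $\Gamma_\epsilon$ the function $u$ is finite and the difference $u_n(x)-u(x)$ is genuinely defined for large $n$.

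Next I would argue by contradiction: suppose $\overline{G_\epsilon}$ contains a nonempty open set $V$. Since $\{x:u(x)\in\mathbb{R}\}$ is open and dense, $V'=V\cap\{u\in\mathbb{R}\}$ is a nonempty open set on which $u$ is finite and real valued, and $G_\epsilon\cap V'$ remains dense in $V'$. On $G_\epsilon$ one has $u_n\ge\widetilde u\ge u+\epsilon$ for \emph{every} $n$. I would then build a competing lower bound by placing a bump on this dense set: put $h=u+\frac{\epsilon}{2}\chi_{G_\epsilon\cap V'}$ and set $w=(I\circ S)(h)$.

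The two things to verify are that $w$ lies below every $u_n$ yet strictly exceeds $u$ on $V'$. For the first, $h\le u_n$ pointwise (on $G_\epsilon\cap V'$ since $h=u+\frac{\epsilon}{2}\le u+\epsilon\le u_n$, and $h=u\le u_n$ elsewhere), so by monotonicity of $I\circ S$ together with $(I\circ S)u_n=u_n$ we get $w=(I\circ S)h\le(I\circ S)u_n=u_n$; likewise $h\ge u$ gives $w\ge u$, and since $u\le w\le u_1$ with $u,u_1$ nearly finite, $w\in\mathcal{NL}(\Omega)$. For the second verification — the crux — I use that the upper Baire operator recovers the bump value off a merely dense set: for $x_0\in V'$ and $\delta$ small one has $\sup\{h(y):y\in B(x_0,\delta)\}\ge\frac{\epsilon}{2}+\sup\{u(y):y\in G_\epsilon\cap B(x_0,\delta)\}$, and letting $\delta\downarrow 0$ the lower semi-continuity of $u$ yields $S(h)(x_0)\ge u(x_0)+\frac{\epsilon}{2}$; hence $S(h)\ge u+\frac{\epsilon}{2}$ throughout $V'$. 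Applying $I$, and using $I(u)=u$ (valid since $u$ is normal lower semi-continuous, hence lower semi-continuous), gives $w=(I\circ S)h\ge u+\frac{\epsilon}{2}$ on $V'$.

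Thus $w\in\mathcal{NL}(\Omega)$ is a lower bound of $\{u_n:n\in\mathbb{N}\}$ with $w\ge u+\frac{\epsilon}{2}>u$ on the nonempty open set $V'$, contradicting that $u$ is the infimum of the $u_n$. Hence $G_\epsilon$ is nowhere dense and $\Gamma_\epsilon=\overline{G_\epsilon}$ has the required properties; the conclusion for increasing sequences follows by applying the decreasing case to $(-u_n)$ and $-u$. I expect the main obstacle to be precisely the second verification: because $G_\epsilon$ is in general only dense, not open, in $V'$, the separating lower bound cannot be a naive ``$u$ plus a bump on an open set'' but must be normalised through $I\circ S$, and one must check that the upper Baire operator indeed propagates the value $u+\frac{\epsilon}{2}$ from the dense set $G_\epsilon\cap V'$ to all of $V'$ — this is exactly where the normal lower semi-continuity of $u$ is essential.
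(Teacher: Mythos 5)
Your argument for the decreasing case is correct, and it takes a genuinely different route from the paper's. The paper works with the set $C$ of common continuity points of $u$ and all the $u_n$ (whose complement is of first Baire category, so $C$ is dense), uses the dense-set inequality (\ref{IneqDSet}) to show that the set $C_\epsilon$ of good points in $C$ is dense, and then uses continuity at points of $C$ to make the good set relatively open; $\Gamma_\epsilon$ is the complement of the resulting open dense set. You instead reduce everything to nowhere-density of the single bad set $G_\epsilon=\{\widetilde u\ge u+\epsilon\}$ and refute its density on an open set by exhibiting the explicit competing lower bound $(I\circ S)\left(u+\tfrac{\epsilon}{2}\chi_{G_\epsilon\cap V'}\right)$. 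The key computation --- that $S$ propagates an upward bump supported on a merely dense subset of $V'$ to all of $V'$, via lower semi-continuity of $u$ and $I(u)=u$ --- is right, and the auxiliary facts you invoke (monotonicity and idempotency of $I\circ S$; a normal lower semi-continuous function sandwiched between two elements of $\mathcal{NL}\left(\Omega\right)$ is again in $\mathcal{NL}\left(\Omega\right)$) are standard in this framework, cf.\ Proposition \ref{MLDense} and Proposition \ref{NLBounded}. Your route trades the Baire-category bookkeeping for the lattice-theoretic leverage of ``$u$ is the \emph{greatest} lower bound'', which is arguably more transparent.

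The gap is your last sentence: the increasing case does not follow ``by applying the decreasing case to $(-u_n)$ and $-u$'', because $\mathcal{NL}\left(\Omega\right)$ is not closed under negation --- the negative of a normal lower semi-continuous function is normal \emph{upper} semi-continuous, and negation conjugates $I\circ S$ into $S\circ I$. Nor does your bump construction dualize naively: to contradict ``$v$ is the \emph{least} upper bound'' you would need a downward bump $v-\tfrac{\epsilon}{2}\chi_{G_\epsilon\cap V'}$, but $S$ takes suprema over balls and therefore does not register a downward perturbation supported on a merely dense set (for instance $(I\circ S)(v-\tfrac{\epsilon}{2}\chi_{D})=v$ when $v$ is continuous and $D$ is countable and dense), so no contradiction results. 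The increasing case can still be closed within your framework, but by a different mechanism: if $v_n\le v-\epsilon$ on the dense subset $G_\epsilon\cap V'$ of $V'$ for every $n$, then (\ref{IneqDSet}), applied on $V'$, upgrades this to $v_n\le v-\epsilon$ on all of $V'$, whence the pointwise supremum $\widetilde v$, and hence $(I\circ S)(\widetilde v)$, stays $\le v-\epsilon$ on $V'$; since $(I\circ S)(\widetilde v)$ is an upper bound of $\{v_n\}$ in $\mathcal{NL}\left(\Omega\right)$ lying below $v$, it must equal $v$, a contradiction. (The paper's continuity-point argument is symmetric in the two cases and needs no separate treatment.) Since the increasing half of the lemma is used alongside the decreasing half --- for the interval endpoints $l^{j}_{n}$ in the proof of Theorem \ref{TUnifCon} --- this is worth repairing rather than waving at.
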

\begin{proof}
Take $\epsilon>0$ arbitrary but fixed.  We start with the set
\begin{eqnarray}
C=\left\{x\in\Omega\begin{array}{|ll}
\forall & n\in\mathbb{N}\mbox{ :} \\
& u_{n}\mbox{, }u\mbox{ continuous at }x \\
\end{array}\right\}\nonumber
\end{eqnarray}
which must have complement a set of First Baire Category, and
hence it is dense.  In view of (\ref{IneqDSet}), the set of points
\begin{eqnarray}
C_{\epsilon}=\left\{x\in C\begin{array}{|ll}
\exists & N_{\epsilon}\in\mathbb{N}\mbox{ :} \\
& u_{n}\left(x\right)-u\left(x\right)<\epsilon\mbox{, }n\geq N_{\epsilon} \\
\end{array}\right\}\nonumber
\end{eqnarray}
must be dense in $\Omega$.  From the continuity of $u$ and the
$u_{n}$ on $C$ it follows that
\begin{eqnarray}
\begin{array}{ll}
\forall & x_{0}\in C_{\epsilon}\mbox{ :}  \\
\exists & \delta_{x_{0}}>0\mbox{ :} \\
& x\in C\mbox{, }\|x-x_{0}\|<\delta\Rightarrow x\in C_{\epsilon} \\
\end{array}\nonumber
\end{eqnarray}
Since $C$ is dense in $\Omega$, the result follows.
\end{proof}

\end{article}

\begin{thebibliography}{99}


\bibitem[\protect\citeauthoryear{Anguelov}{2004}]{Anguelov}
Anguelov, R.
\newblock{Dedekind order completion of C(X) by
Hausdorff continuous functions}, Quaestiones Mathematicae {\bf
27}, 153-170 (2004).

\bibitem[\protect\citeauthoryear{Anguelov et. al}{2006}]{sam}
Anguelov, R., Markov, S. and Sendov, B.
\newblock{The Set of
Hausdorff Continuous Functions --- the Largest Linear Space of
Interval Functions}, Reliable Computing \textbf{12}, 337-363
(2006).

\bibitem[\protect\citeauthoryear{Anguelov and Rosinger}{2005}]{AnguelovRosinger}
Anguelov, R. and Rosinger E. E.
\newblock{Hausdorff Continuous
Solutions of Nonlinear PDEs through the Order Completion Method},
Quaestiones Mathematicae \textbf{28}(3), 271-285 (2005).

\bibitem[\protect\citeauthoryear{Anguelov and van der Walt}{2005}]{AngvdWalt}
Anguelov, R. and van der Walt J. H.
\newblock{Order convergence on
$\mathcal{C}\left(X\right)$}, Quaestiones Mathematicae
\textbf{28}(4), 425-457 (2005).

\bibitem[\protect\citeauthoryear{Arnold}{2004}]{Arnold}
Arnold, V. I.
\newblock{Lectures on PDEs}, Springer Universitext,
(2004).

\bibitem[\protect\citeauthoryear{Baire}{1905}]{Baire}
Baire, R.
\newblock{Lecons sur les fonctions discontinues},
Collection Borel, Paris, (1905).

\bibitem[\protect\citeauthoryear{Bartle}{1976}]{Bartle}
Bartle, R. G.
\newblock{The elements of real analysis},
John Wiley \& Sons, New York, Chichester, Brisbane, Toronto,
Singapore, (1976).

\bibitem[\protect\citeauthoryear{Beattie and Butzmann}{2002}]{Beattie}
Beattie, R. and Butzmann H.-P.
\newblock{Convergence structures and
applications to functional analysis}, Kluwer Academic Plublishers,
Dordrecht, Boston, London, (2002).

\bibitem[\protect\citeauthoryear{Dilworth}{1950}]{Dilworth}
Dilworth, R. P.
\newblock{The normal completion of the lattice of
continuous functions}, Trans. AMS (1950), 427-438.

\bibitem[\protect\citeauthoryear{Forster}{1981}]{Forster}
Forster, O.
\newblock{Analysis 3, Integralrechnung im
$\mathbb{R}^{n}$ mit Anwendungen}, Friedr. Vieweg, Braunschweig,
Wiesbaden, (1981).

\bibitem[\protect\citeauthoryear{Luxemburg and Zaanen}{1971}]{RieszI}
Luxemburg, W. A. and Zaanen A. C.
\newblock{Riesz Spaces I},
North-Holland, Amsterdam London, (1971).

\bibitem[\protect\citeauthoryear{Neuberger}{1997}]{Neuberger 1}
Neuberger, J. W.
\newblock{Sobolev gradients and differential
equations}, Springer Lecture Notes in Mathematics vol. 1670,
(1997).

\bibitem[\protect\citeauthoryear{Neuberger}{1999}]{Neuberger 2}
Neuberger, J. W.
\newblock{Continuous Newton's method for
polynomials}, Mathematical Intelegencer \textbf{21}, 18-23 (1999).

\bibitem[\protect\citeauthoryear{Neuberger}{2003}]{Neuberger 3}
Neuberger, J. W.
\newblock{A near minimal hypothesis Nash-Moser
theorem}, Int. J. Pure Appl. Math. \textbf{4}, 269-280 (2003).

\bibitem[\protect\citeauthoryear{Neuberger}{2005}]{Neuberger 4}
Neuberger, J. W.
\newblock{Prospects of a central theory of partial
differential equations}, Math. Intel. \textbf{27}(3), 47-55
(2005).

\bibitem[\protect\citeauthoryear{Oberguggenberger and Rosinger}{1994}]{Obergugenberger and Rosinger}
Oberguggenberger, M. B. and Rosinger, E. E.
\newblock{Solution of
continuous nonlinear PDEs through order completion},
North-Holland, Amsterdam, London, New York, Tokyo, (1994).

\bibitem[\protect\citeauthoryear{Oxtoby}{1980}]{Oxtoby}
Oxtoby, J. C.
\newblock{Measure and category 2nd Eddition},
Springer-Verlag, New York, Heidelberg, Berlin, (1980).

\bibitem[\protect\citeauthoryear{van der Walt}{2004}]{vdWalt}
van der Walt, J. H.
\newblock{Order convergence in sets of Hausdorff
continuous functions.} Honors Essay, University of Pretoria
(2004).

\bibitem[\protect\citeauthoryear{van der Walt}{2006}]{vdWalt1}
van der Walt, J. H.
\newblock{Order convergence on Archimedean
vector lattices.} MSc Thesis, University of Pretoria (2006).

\bibitem[\protect\citeauthoryear{van der Walt}{2007 [2]}]{vdWalt2}
van der Walt, J. H.
\newblock{On the completion of uniform convergence spaces and an application to nonlinear PDEs}, Technical Report UPWT 2007/14.

\bibitem[\protect\citeauthoryear{van der Walt}{2008 [1]}]{vdWalt3}
van der Walt, J. H.
\newblock{The uniform order convergence structure on
$\mathcal{ML}\left(X\right)$}, Quaestiones Mathematicae
\textbf{31}, 1-23 (2008).

\bibitem[\protect\citeauthoryear{van der Walt}{2008 [2]}]{vdWalt5}
van der Walt, J. H.
\newblock{The order completion method for systems of nonlinear PDEs:  Pseudo-topological perspectives}, To Appear in Acta Appl Math.

\bibitem[\protect\citeauthoryear{van der Walt}{2008 [3]}]{vdWalt6}
van der Walt, J. H.
\newblock{Residual convergence of real functions and an application to nonlinear intial value problems}, To
Appear.

\bibitem[\protect\citeauthoryear{Wyler}{1970}]{Wyler}
Wyler, O.
\newblock{Ein komplettieringsfunktor f\"{u}r uniforme limesr\"{a}ume}, Math.
Nachr. \textbf{40}, 1-12 (1970).

\end{thebibliography}
\end{document}